\newtheorem{lema}{Lemma}[section]
\newtheorem{teo}[lema]{Theorem}
\newtheorem{cor}[lema]{Corollary}
\newtheorem{prop}[lema]{Proposition}
\theoremstyle{definition}
\newtheorem{ex}[lema]{Example}
\newtheorem{que}[lema]{Question}
\numberwithin{equation}{section}
\newcommand{\cf}{\emph{cf.} }
\newcommand{\tq}{\,|\,}
\newcommand{\ie}{\emph{i.e.}}
\newcommand{\apres}[2]{\langle #1 \tq #2\rangle}
\newcommand{\ed}{\ar@{-}}
\renewcommand{\epsilon}{\varepsilon}
\newcommand{\lgoth}{\mathcal{L}}
\numberwithin{equation}{section}
\newcommand{\BR}{\mathbb{R}}
\newcommand{\BN}{\mathbb{N}}
\newcommand{\BZ}{\mathbb{Z}}
\DeclareMathOperator{\Aut}{Aut}
\DeclareMathOperator{\Id}{Id}
\DeclareMathOperator{\im}{Im}
\DeclareMathOperator{\rk}{rk}
\DeclareMathOperator{\Fix}{Fix}\DeclareMathOperator{\FGFPa}{FGFP_a}\DeclareMathOperator{\FnFPa}{F_nFP_a}\DeclareMathOperator{\FPnFPa}{FP_nFP_a}
\renewcommand{\F}{\operatorname{F}}
\renewcommand{\FP}{\operatorname{FP}}
\newcommand{\Cay}{\operatorname{Cay}}
\theoremstyle{plain}
\newtheorem{mainthm}{Theorem}
\newtheorem{maincor}{Corollary}
\begin{document}

\title{On the finiteness properties of fixed subgroups of automorphisms}

\author{Kisnney Emiliano de Almeida\\
Universidade Estadual de Feira de Santana,\\
Departamento de Ciências Exatas,\\
Av. Transnordestina S/N, CEP 44036-900 - Feira de Santana - BA - Brazil.\\
e-mail:~\texttt{kisnney@gmail.com}\vspace*{4mm}\\
Luis Mendonça ~\\
Universidade Federal de Minas Gerais,\\
Departamento de Matem\'atica,\\
Av. Antônio Carlos, 6627, CEP 31270-901 - Belo Horizonte - MG - Brazil.\\
e-mail:~\texttt{luismendonca@mat.ufmg.br}
}

\date{\today}

\maketitle

\begin{abstract}
    We use Sigma-invariants to study homotopical and homological finiteness properties of fixed subgroups of automorphisms of a group $G$ in terms of  its center $Z(G)$ and the induced automorphisms on its associated quotient $G/Z(G)$. Specializing to the case where the center is a direct factor of the group, we answer a question made by Lei, Ma and Zhang.
\end{abstract}

\section{Introduction}
Given a group $G$ and an automorphism $\phi \in \text{Aut}(G)$, the subgroup of fixed points, $$\Fix \phi = \{g \in G \mid \phi(g) = g\},$$ is an object of fundamental study. It encodes the symmetry of $G$ under the action of $\phi$ and its internal structure reveals deep information about the group $G$ itself.  

There has been a wide interest in fixed subgroups of finitely generated free groups: Gersten \cite{Gersten} has proven them to be always finitely generated and Bestvina and Handel proved that the rank of $\Fix\phi$ is uniformly bounded by the rank of the ambient free group \cite{bestvina1992train}, confirming a conjecture of Scott from the 70s. 

With that in mind, Lei, Ma and Zhang \cite{LeiMaZhang} have defined that a group $G$ has $\FGFPa$ property if $\Fix \phi$ is finitely generated for all $\phi \in \Aut(G)$.  Besides from free groups, Minasyan and Osin \cite{minasyan2012fixed} have showed that this property holds for limit groups  and Zhang, Ventura and Wu \cite{ZhangVenturaWu2015} have proved that it holds for finite direct products of non-abelian free groups, among other classes. 

It is not always true that fixed subgroups of finitely generated groups are finitely generated, even for direct products of $\FGFPa$ groups. A simple example is given by the automorphism $\phi \in \Aut(F_2 \times \BZ)$ given by $\phi(g,n) = (g, \alpha(g) + n)$, where $\alpha \colon F_2 \to \BZ$ sends all elements in a free basis of $F_2$ to $1$. In this case, $\Fix \phi = \ker \alpha \times \BZ$, which is not finitely generated.

Our main goal in this paper is to study finiteness properties $\F_n$ and $\FP_n$ of $\Fix\phi$, for $\phi$ being an automorphism of a given group $G$, in terms of its center $Z(G)$ and the quotient $G/Z(G)$. These finiteness properties generalize the concepts of finitely generated groups - indeed a group $G$ is of type $\F_1$ if and only if $G$ is of type $\FP_1$ if and only if $G$ is finitely generated; also $G$ is finitely presented if and only if $G$ is of type $\F_2$, which implies type $\FP_2$. We will explain more about these finiteness properties in Section \ref{sec.prel}.

For a finitely generated group $G$, its BNS-invariant $\Sigma^1(G)$ is a certain subset of the character sphere $S(G)$; the latter is formed  by the classes $[\chi]$ of non-trivial homomorphisms $\chi\colon G \to \BR$, under the equivalence relation where $\chi_1 \sim r \chi_1$ if $r\in \BR_{>0}$. Its main application is to determine which subgroups of $G$ above the commutator $G'$ are finitely generated \cite{BNS1987}. There are also higher topological and homological versions $\Sigma^n(G)$ and $\Sigma^n(G,\BZ)$ which may be similarly used to determine if those subgroups inherit the $\F_n$ and $\FP_n$ properties from the group $G$ \cite{BieriRenz1988}- we give more details about them in Section \ref{sec.prel}. 

Generalizing the $F_2\times \BZ$ example above, Lei, Ma and Zhang \cite{LeiMaZhang} considered direct products of the form $G \times A$, where $A$ is free abelian of finite rank. If $Z(G)$ is trivial, then all automorphism of such a group are of the form \[ \phi(g,a) = (\psi(g), \alpha(g) + \gamma(a)),\] where $\psi \in \Aut(G)$, $\gamma \in \Aut(A)$ and $\alpha \colon G \to A$ is a homomorphism. The homomorphism $\alpha$ turns out to have strong influence in the finiteness properties of the fixed subgroup $\Fix \phi$, and this information is captured by studying the BNS-invariant of the group $G$.

A group $H$ is said to be  weakly Howson if the intersection of two finitely generated subgroups $A, B \leq H$, one of them being normal in $H$, is always finitely generated. Lei, Ma and Zhang proved the following.

\begin{teo}[\cite{LeiMaZhang}] \label{teoLeiMaZhang}
Let $H$ be a weakly Howson group with trivial center.
\begin{enumerate}
    \item $H \times \BZ$ has $\FGFPa$ if and only if $H$ has $\FGFPa$ and $\Sigma^1(H)$ contains all classes $[\chi]$ of homomorphisms with $\rk_{\BZ} \im \chi = 1$.
    \item  $H \times \BZ^m$ has $\FGFPa$ for all $m \geq 1$ if and only if $H$ has $\FGFPa$ and $H'$ is finitely generated.
\end{enumerate}
\end{teo}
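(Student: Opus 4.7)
The plan is to start from the hypothesis $Z(H) = 1$: the center $Z(H \times \BZ^m) = \BZ^m$ is characteristic, so any $\phi \in \Aut(H \times \BZ^m)$ has the form $\phi(g,a) = (\psi(g), \alpha(g) + \gamma(a))$ for some $\psi \in \Aut(H)$, $\gamma \in \Aut(\BZ^m)$, and homomorphism $\alpha \colon H \to \BZ^m$. A direct calculation then produces a short exact sequence
\[
1 \to \Fix \gamma \to \Fix \phi \to \alpha^{-1}(N) \cap \Fix \psi \to 1,
\]
where $N := \im(\Id - \gamma) \leq \BZ^m$ and $\Fix \gamma = \ker(\Id - \gamma) \leq \BZ^m$ is automatically finitely generated. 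Hence $\Fix \phi$ is finitely generated exactly when $\alpha^{-1}(N) \cap \Fix \psi$ is.

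For the forward directions, I would plug in explicit automorphisms. The choice $(\alpha, \gamma) = (0, \Id)$ gives $\Fix \phi = \Fix \psi \times \BZ^m$, so in both parts $H$ must have $\FGFPa$. The choice $(\psi, \gamma) = (\Id, \Id)$ gives $\Fix \phi = \ker \alpha \times \BZ^m$, so for each $\alpha \colon H \to \BZ^m$ the subgroup $\ker \alpha$ must be finitely generated. In Part (1) with $m = 1$, this translates by the Bieri--Neumann--Strebel theorem into the condition $[\pm \alpha] \in \Sigma^1(H)$ for every nontrivial $\alpha \colon H \to \BZ$, i.e.\ that $\Sigma^1(H)$ contains every class of image of rank $1$. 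In Part (2), applying this to the canonical surjection $\alpha \colon H \twoheadrightarrow H^{\mathrm{ab}}/\tors \cong \BZ^r$ forces $\ker \alpha$---a finite-index overgroup of $H'$---to be finitely generated, hence $H'$ is finitely generated.

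For the backward directions, I need $\alpha^{-1}(N) \cap \Fix \psi$ to be finitely generated for arbitrary $(\psi, \gamma, \alpha)$. The strategy is: first show that $\alpha^{-1}(N)$ is itself finitely generated and normal in $H$, and then invoke the weakly Howson property to intersect with the finitely generated subgroup $\Fix \psi$. In Part (1) with $\gamma \in \{\pm \Id\}$: when $\gamma = -\Id$, $N = 2\BZ$ so $\alpha^{-1}(N)$ has index at most $2$ in $H$ and the intersection is a finite-index subgroup of $\Fix \psi$; when $\gamma = \Id$, $\alpha^{-1}(N) = \ker \alpha$ is either all of $H$ (if $\alpha = 0$) or finitely generated by the BNS hypothesis. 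In Part (2), $\alpha^{-1}(N)$ is the kernel of the composite $H \twoheadrightarrow H^{\mathrm{ab}} \to \BZ^m/N$; since $\BZ^m/N$ is a finitely generated abelian group, $\alpha^{-1}(N)/H'$ is a finitely generated subgroup of $H^{\mathrm{ab}}$, and combined with $H'$ finitely generated this gives $\alpha^{-1}(N)$ finitely generated. In both parts $\alpha^{-1}(N)$ is normal in $H$ as a kernel, and the weakly Howson hypothesis is precisely what is needed to conclude that $\alpha^{-1}(N) \cap \Fix \psi$ is finitely generated. The main technical point---and the only real obstacle---is this coupling of a Sigma-invariant (or commutator) condition with the weakly Howson property: without the latter, finite generation of the kernel $\alpha^{-1}(N)$ would not transfer to the generally non-normal subgroup $\Fix \psi$.
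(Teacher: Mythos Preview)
This theorem is not proved in the present paper: it is quoted from \cite{LeiMaZhang} as background, so there is no ``paper's own proof'' to compare against. Your argument is nonetheless correct.

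It is worth noting that your short exact sequence
\[
1 \to \Fix\gamma \to \Fix\phi \to \alpha^{-1}(N)\cap\Fix\psi \to 1
\]
is precisely the specialization to $G=H\times\BZ^m$ of the sequence $1\to Z(G)\cap\Fix\phi\to\Fix\phi\to P_\phi\to 1$ that underlies the paper's Theorem~\ref{Fix.Center} (compare also Corollary~\ref{cordirprod}). The paper's own generalization, Theorem~\ref{cor.dpdim}, dispenses with the weakly Howson hypothesis but in exchange must test $\ker(\chi|_{\Fix\psi})$ for \emph{every} $\psi\in\Aut(H)$; your use of weakly Howson is exactly what lets you reduce the backward direction to the case $\psi=\Id$ (that is, to finite generation of $\alpha^{-1}(N)$ alone) and then intersect. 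So your proof and the paper's machinery are variations on the same structural decomposition, with the weakly Howson property playing the role that the paper's stronger hypothesis on all $\Fix\psi$ plays in Theorem~\ref{cor.dpdim}.
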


Inspired by the result above, the authors formulated the following question.

\begin{que}\cite{LeiMaZhang}\label{question}
    Does $H\times \BZ$ have $\FGFPa$ if the group $H$ has $\FGFPa$ and $H'$ is finitely generated?
\end{que}

Our main result is the following.

\begin{mainthm}\label{Fix.Center}
 Let $n\in\BN$ and $G$ be a group of type $\F_n$ with finitely generated center. Let $\phi \in \Aut(G)$, $\bar{\phi}$ the automorphism of $G/Z(G)$ induced by $\phi$ and \[ I_\phi = \{ z^{-1} \phi(z) \mid z \in Z(G)\} \leq Z(G).\] Then the following statements are equivalent:
 \begin{enumerate}
  \item[(i)]$\Fix \phi$ is of type $\F_n$ (resp. $\FP_n$),
  \item[(ii)] Both $\Fix \bar{\phi}$ and its subgroup $P_\phi = \{ gZ(G) \in G/Z(G) \mid g^{-1} \phi(g) \in I_\phi\} \lhd \Fix\bar{\phi}$ are of type $\F_n$ (resp. $\FP_n$),
  \item[(iii)] $\Fix \bar{\phi}$ is of type $\F_n$ (resp. $\FP_n$) and for all $[\chi] \in \Sigma^1(\Fix \bar{\phi} )^c$ (resp. $\Sigma^1(\Fix \bar{\phi}, \BZ)^c$)  there exists $g \in G$ such that $g^{-1} \phi(g) \in I_\phi$ and $\chi(gZ(G)) \neq 0$.
 \end{enumerate}
\end{mainthm}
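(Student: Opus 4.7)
The plan is to organize the proof around two short exact sequences arising from the central projection $\pi\colon G \twoheadrightarrow G/Z(G)$. Since the center is characteristic, $\phi$ restricts to an automorphism of $Z(G)$, so $\Fix(\phi|_{Z(G)}) = \Fix\phi \cap Z(G)$ is well-defined and, being a subgroup of the finitely generated abelian group $Z(G)$, is of type $\F_\infty$. I would then verify that $\pi(\Fix\phi) = P_\phi$: if $gZ(G) \in P_\phi$, pick $z \in Z(G)$ with $g^{-1}\phi(g) = z^{-1}\phi(z)$ and use centrality of $z$ to conclude that $gz^{-1} \in \Fix\phi$ has the same image as $g$. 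This yields
\[
1 \longrightarrow \Fix(\phi|_{Z(G)}) \longrightarrow \Fix\phi \longrightarrow P_\phi \longrightarrow 1.
\]
Next I would construct the companion sequence
\[
1 \longrightarrow P_\phi \longrightarrow \Fix\bar\phi \xrightarrow{\ \delta\ } Q \longrightarrow 1,
\]
where $\delta(gZ(G)) = g^{-1}\phi(g) \cdot I_\phi \in Z(G)/I_\phi$ and $Q = \im\delta$. Well-definedness on cosets follows because replacing $g$ by $gz$ multiplies $g^{-1}\phi(g)$ by $z^{-1}\phi(z) \in I_\phi$, and the homomorphism property uses that $g_1^{-1}\phi(g_1) \in Z(G)$ commutes with $g_2$ whenever both $g_i Z(G)$ lie in $\Fix\bar\phi$. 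The kernel of $\delta$ is exactly $P_\phi$, and $Q \leq Z(G)/I_\phi$ is finitely generated abelian, hence of type $\F_\infty$.

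With these two sequences in hand, (i) $\Leftrightarrow$ (ii) is a consequence of standard results on finiteness properties under extensions. Since $\Fix(\phi|_{Z(G)})$ is $\F_\infty$, the first sequence gives that $\Fix\phi$ is of type $\F_n$ (resp.\ $\FP_n$) if and only if $P_\phi$ is. Since $Q$ is $\F_\infty$, the second sequence forces $\Fix\bar\phi$ to inherit the same type as soon as $P_\phi$ does. Hence (i) implies both halves of (ii), and (ii) returns (i) via the first sequence.

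For (ii) $\Leftrightarrow$ (iii), the second sequence has abelian quotient, putting us in the setting of the Bieri--Renz/Sigma-invariant criterion: assuming $\Fix\bar\phi$ has the required type, $P_\phi$ inherits it exactly when every character of $\Fix\bar\phi$ vanishing on $P_\phi$ lies in the relevant Sigma invariant. A direct translation finishes the proof: a character $\chi$ vanishes on $P_\phi$ precisely when $\chi(gZ(G)) = 0$ for every $g \in G$ with $g^{-1}\phi(g) \in I_\phi$, and the existence statement in (iii) is exactly the contrapositive. The main obstacle I anticipate is the careful setup and verification of the second short exact sequence --- the well-definedness of $\delta$ and the identification of its kernel rely on the centrality of $g^{-1}\phi(g)$ for $gZ(G) \in \Fix\bar\phi$ together with the fact that $I_\phi$ is a subgroup (not merely a subset) of $Z(G)$. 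Once the sequence is in place, the remaining translation between the Sigma-invariant condition and the existence statement in (iii) is essentially bookkeeping.
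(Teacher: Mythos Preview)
Your proposal is correct and follows essentially the same route as the paper: the paper builds the identical pair of short exact sequences (your map $\delta$ is exactly the paper's $\epsilon_\phi$, defined just before the proof), uses the same extension arguments from Proposition~\ref{basicfinprop} for (i)$\Leftrightarrow$(ii), and invokes the same Sigma-invariant criterion (Theorem~\ref{sigma1app}) for (ii)$\Leftrightarrow$(iii). Your write-up is slightly more explicit about well-definedness and the homomorphism property of $\delta$, but there is no substantive difference in strategy.
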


An interesting case is to consider only automorphisms of finite order. For example, Kochloukova, Martínez-Pérez, Nucinkis \cite{KMPN2012} have shown that the fixed points of the finite order automorphisms of the generalized Thompson's groups are finitely generated if and only if they are of type $\F_n$ for all $n$; they also prove the latter is actually true for the Thompson's group $\F$.

Roy and Ventura \cite{MallikaEnric} proved that fixed subgroups of finite order 
automorphisms of $F_n \times \BZ^m$ are always finitely generated - although that is not true for all automorphisms, as we have mentioned. An application of Theorem \ref{Fix.Center} gives the following generalization.

\begin{maincor}\label{finiteorder}
Let $G$ be a group of type $\F_n$ with finitely generated center, let $\phi \in \Aut(G)$ be an automorphism of finite order and let $\bar{\phi}$ the automorphism of $G/Z(G)$ induced by $\phi$. Then $\Fix \phi$ is of type $\F_n$ (resp. $\FP_n$)  if and and only if $\Fix \bar{\phi}$ is of type $\F_n$ (resp. $\FP_n$). 
\end{maincor}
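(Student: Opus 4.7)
The plan is to derive the corollary from Theorem~\ref{Fix.Center} by showing that, under the finite-order hypothesis, the normal subgroup $P_\phi \lhd \Fix\bar{\phi}$ has finite index. Once this is established, condition (ii) of Theorem~\ref{Fix.Center} collapses to the single requirement that $\Fix\bar{\phi}$ be of type $\F_n$ (resp.\ $\FP_n$), since these properties are inherited by finite-index sub- and supergroups.

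First I would introduce the map
\[
\beta \colon \Fix\bar{\phi} \longrightarrow Z(G)/I_\phi, \qquad gZ(G) \longmapsto g^{-1}\phi(g) \cdot I_\phi.
\]
For well-definedness, if $g' = gz$ with $z \in Z(G)$ then $g'^{-1}\phi(g') = g^{-1}\phi(g)\cdot z^{-1}\phi(z)$, and the second factor lies in $I_\phi$. For the homomorphism property, note that whenever $gZ(G) \in \Fix\bar{\phi}$ the element $g^{-1}\phi(g)$ is central in $G$, so $(g_1 g_2)^{-1}\phi(g_1 g_2) = (g_1^{-1}\phi(g_1))(g_2^{-1}\phi(g_2))$. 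By construction $\ker \beta = P_\phi$.

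Next I would bound $\im \beta$. If $m$ denotes the order of $\phi$ and $z := g^{-1}\phi(g) \in Z(G)$, an easy induction using the centrality of $z$ yields
\[
\phi^k(g) = g \cdot z\,\phi(z)\,\phi^2(z)\cdots \phi^{k-1}(z),
\]
and $\phi^m(g) = g$ forces $z\,\phi(z)\cdots \phi^{m-1}(z) = 1$. Hence $\im \beta$ lands in the Tate cohomology group
\[
\hat{H}^{-1}(\langle \phi \rangle, Z(G)) \;=\; \{\, z \in Z(G) \mid z\,\phi(z)\cdots \phi^{m-1}(z) = 1 \,\}\,/\,I_\phi,
\]
which is annihilated by $m$ and, since $Z(G)$ is finitely generated, is therefore finite. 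Thus $[\Fix\bar{\phi} : P_\phi] < \infty$.

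Finally, I would invoke Theorem~\ref{Fix.Center}. The direction (i)$\Rightarrow$(ii) gives the ``only if'' part of the corollary. For the ``if'' part, assuming $\Fix\bar{\phi}$ is of type $\F_n$ (resp.\ $\FP_n$), its finite-index subgroup $P_\phi$ inherits the same property, so (ii) holds and hence (i) follows. (Alternatively, (iii) is automatic, since $\Fix\bar{\phi}/P_\phi$ is finite and $\BR$ is torsion-free, so any non-trivial character of $\Fix\bar{\phi}$ restricts non-trivially to $P_\phi$.) I expect the principal obstacle to be the finite-index claim, which is controlled by the cocycle/Tate cohomology argument above; the rest amounts to bookkeeping with Theorem~\ref{Fix.Center}.
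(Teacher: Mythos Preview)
Your proposal is correct and follows essentially the same route as the paper: both arguments introduce the homomorphism $\epsilon_\phi$ (your $\beta$), use the identity $z\phi(z)\cdots\phi^{m-1}(z)=1$ for $z=g^{-1}\phi(g)$ to show that $\im\epsilon_\phi$ has exponent dividing $m$, conclude that $P_\phi=\ker\epsilon_\phi$ has finite index in $\Fix\bar\phi$, and then appeal to Theorem~\ref{Fix.Center}. The only cosmetic difference is that you package the exponent bound via the Tate group $\hat H^{-1}(\langle\phi\rangle,Z(G))$ and invoke finite generation of $Z(G)$, whereas the paper derives $z^{-m}\in I_\phi$ by an explicit telescoping and uses finite generation of $\Fix\bar\phi$ instead.
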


We say that a group has property $\FnFPa$ (resp. $\FPnFPa$) if $\Fix\phi$ is of type $\F_n$ (resp. $\FP_n$) for all $\phi\in \Aut G$. Note that $\FGFPa = \FnFPa = \FPnFPa$ for $n=1$. The theorem below is a criterion which analyzes the properties above from the correlate finiteness properties of some kernels. We use the notation $I_{\phi}$ as in Theorem \ref{Fix.Center}.

\begin{mainthm}\label{FnFPaCenter}
 Let $G$ be a group of type $\F_n$  with finitely generated center. Then $G$ satisfies $\FnFPa$ (resp. $\FPnFPa$) if, and only if, for every homomorphism $\nu \colon G/Z(G) \to Z(G)$ and for all $\phi \in \Aut(G)$, the kernel of the map
 \[\theta \colon \Fix \bar{\phi} \to Z(G)/I_\phi, \  \ gZ(G) \mapsto g^{-1}\phi(g) \nu(gZ(G)) I_\phi\]  is of type $\F_n$ (resp. $\FP_n$).
\end{mainthm}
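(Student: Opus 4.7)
The strategy is to use Theorem~\ref{Fix.Center} to reformulate the $\FnFPa$ (resp.\ $\FPnFPa$) condition purely in terms of the subgroups $P_\phi$, and then to identify the family of kernels appearing in the statement of Theorem~\ref{FnFPaCenter} with the family $\{P_\phi\}_{\phi \in \Aut(G)}$. By Theorem~\ref{Fix.Center}, $G$ satisfies $\FnFPa$ (resp.\ $\FPnFPa$) if and only if, for every $\phi \in \Aut(G)$, both $\Fix \bar{\phi}$ and $P_\phi$ are of type $\F_n$ (resp.\ $\FP_n$). The map $gZ(G) \mapsto g^{-1}\phi(g)I_\phi$ is a well-defined homomorphism $\Fix\bar{\phi} \to Z(G)/I_\phi$ with kernel exactly $P_\phi$: it is well-defined because $g^{-1}\phi(g) \in Z(G)$ whenever $g \in \Fix\bar{\phi}$, and because replacing $g$ by $gz$ with $z \in Z(G)$ alters it by $z^{-1}\phi(z) \in I_\phi$. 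Since $Z(G)$ is finitely generated abelian by hypothesis, so is the quotient $Z(G)/I_\phi$, and therefore so is $\Fix\bar{\phi}/P_\phi$; in particular it is of type $\F_\infty$. Standard closure of $\F_n$ and $\FP_n$ under group extensions then shows that $P_\phi$ of type $\F_n$ (resp.\ $\FP_n$) already forces $\Fix\bar{\phi}$ to share the property. The criterion thus simplifies to: $G$ satisfies $\FnFPa$ (resp.\ $\FPnFPa$) if and only if $P_\phi$ is of type $\F_n$ (resp.\ $\FP_n$) for every $\phi \in \Aut(G)$.

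The core step is then to match $\{P_{\phi'} \mid \phi' \in \Aut(G)\}$ with the family $\{\ker\theta_{\phi,\nu}\}$ of kernels that appear in the statement. Given $\phi \in \Aut(G)$ and a homomorphism $\nu\colon G/Z(G) \to Z(G)$, I introduce the twisted map
\[
\phi'\colon G \to G, \qquad \phi'(g) = \phi(g)\,\nu(gZ(G)).
\]
Since $\nu$ takes values in $Z(G)$, $\phi'$ is a group homomorphism; since $Z(G)$ is characteristic under $\phi$ and $\nu$ vanishes on the identity coset, $\phi'$ is bijective, so $\phi' \in \Aut(G)$, with $\bar{\phi'} = \bar\phi$ and $I_{\phi'} = I_\phi$. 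The identity $g^{-1}\phi'(g) = g^{-1}\phi(g)\,\nu(gZ(G))$ then yields $P_{\phi'} = \ker\theta_{\phi,\nu}$, and conversely the choice $\nu = 0$ realizes every $P_{\phi'}$ as such a kernel. Combined with the reformulation above, this gives Theorem~\ref{FnFPaCenter}.

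The main technical point will be the verification that the twist $\phi'$ is indeed an automorphism and that the equalities $I_{\phi'} = I_\phi$ and $\bar{\phi'} = \bar\phi$ hold; once these are in place the rest of the argument is a direct application of Theorem~\ref{Fix.Center} together with the standard inheritance of $\F_n$ and $\FP_n$ along extensions by finitely generated abelian quotients (the homological case going through the Lyndon--Hochschild--Serre spectral sequence).
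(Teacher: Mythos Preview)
Your argument is correct and matches the paper's approach: both directions go through Theorem~\ref{Fix.Center}, and the key construction is the twisted automorphism $\phi'(g)=\phi(g)\,\nu(gZ(G))$ (the paper calls it $\psi$), with the identifications $\bar{\phi'}=\bar\phi$, $I_{\phi'}=I_\phi$, and $P_{\phi'}=\ker\theta$. The one place where the paper is more careful is exactly the point you flag at the end: your sentence ``since $Z(G)$ is characteristic under $\phi$ and $\nu$ vanishes on the identity coset, $\phi'$ is bijective'' is not actually an argument, and the paper instead exhibits the explicit inverse $\eta(g)=\phi^{-1}(g)\,\bigl(\phi^{-1}\nu(\phi^{-1}(g)Z(G))\bigr)^{-1}$ and checks $\psi\eta=\eta\psi=\Id$.
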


The following corollary gives us a glance of what these properties demand of subgroups above the commutator.

\begin{maincor}\label{corfnfpa.sigma}
    Let $G$ be a group with the $\FnFPa$ (resp. $\FPnFPa$) property and finitely generated center. If $G' \leq N \leq G$ satisfies $\rk_{\BZ} G/N \leq \rk_{\BZ} Z(G)$, then $N$ is of type $\F_n$ (resp. $\FP_n$). 
\end{maincor}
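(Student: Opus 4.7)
My plan is to use Theorem~\ref{FnFPaCenter} with $\phi = \operatorname{id}$ to realize a finite-index overgroup of $NZ(G)$ as a kernel $\ker \tilde{\nu}$ of a homomorphism $\tilde{\nu} \colon G \to Z(G)$ vanishing on $Z(G)$, and then to recover $N$ itself from $NZ(G)$ as a direct factor. The principal obstacle is that $Z(G)$ need not be contained in $N$, so $N$ itself cannot be presented as such a kernel; the key observation will be that after a harmless commensurability reduction, $NZ(G)$ splits as an internal direct product $N \times Z'$ for some $Z' \leq Z(G)$ free abelian.

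First I would reduce to the case $G/N \cong \BZ^r$ with $r \leq s := \rk_\BZ Z(G)$. Since $G$ is finitely generated, $G/N$ is finitely generated abelian, so replacing $N$ by the preimage of its (finite) torsion subgroup yields a commensurable overgroup with free quotient; this is a valid reduction since $\F_n$ and $\FP_n$ are preserved by commensurability. Then $Z(G)/(Z(G) \cap N) \cong Z(G) N / N$ embeds in $G/N \cong \BZ^r$, so it is free abelian of some rank $k \leq r$; consequently $Z(G) \cap N$ is a pure subgroup of the finitely generated abelian group $Z(G)$, hence a direct summand. Writing $Z(G) = (Z(G) \cap N) \oplus Z'$ with $Z' \cong \BZ^k$, one immediately obtains $NZ(G) = N \times Z'$ as an internal direct product, since $Z'$ is central in $G$ and $N \cap Z' \subseteq N \cap Z(G) \cap Z' = 0$.

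To show $NZ(G)$ has the required finiteness property, I would enlarge further to $\tilde{N}$, the preimage in $G$ of the (finite) torsion subgroup of $G/NZ(G)$, so that $G/\tilde{N} \cong \BZ^{r-k}$ is free of rank $\leq s$ and $[\tilde{N} : NZ(G)] < \infty$. Choose any injection $\iota \colon G/\tilde{N} \hookrightarrow Z(G)$ and let $\tilde{\nu} \colon G \to Z(G)$ be the composition $G \twoheadrightarrow G / \tilde{N} \xrightarrow{\iota} Z(G)$; it vanishes on $Z(G)$ because $Z(G) \leq NZ(G) \leq \tilde{N} = \ker \tilde{\nu}$, so it descends to a homomorphism $\nu \colon G/Z(G) \to Z(G)$. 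Applied with this $\nu$ and with $\phi = \operatorname{id}$, Theorem~\ref{FnFPaCenter} gives $\Fix \bar{\phi} = G/Z(G)$, $I_\phi = \{1\}$ and $\theta(gZ(G)) = \nu(gZ(G))$, so its kernel $\tilde{N}/Z(G) = \ker \nu$ is of type $\F_n$ (resp.\ $\FP_n$). Since $Z(G)$ is of type $\F_\infty$, the extension $1 \to Z(G) \to \tilde{N} \to \tilde{N}/Z(G) \to 1$ then shows $\tilde{N}$, and hence its finite-index subgroup $NZ(G)$, is of type $\F_n$ (resp.\ $\FP_n$).

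Finally, $N$ is a retract of $NZ(G) = N \times Z'$ and therefore inherits the property $\F_n$ (resp.\ $\FP_n$); by commensurability, so does the original $N$ from the initial reduction.
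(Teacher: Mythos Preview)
Your argument is correct. Both you and the paper begin by specializing Theorem~\ref{FnFPaCenter} to $\phi=\mathrm{Id}$, obtaining that every homomorphism $\nu\colon G/Z(G)\to Z(G)$ has kernel of type $\F_n$ (resp.\ $\FP_n$). From there the two proofs diverge. The paper stays in the language of $\Sigma$-invariants: for each nonzero $\chi\colon G\to\BR$ with $\chi(N)=0$ it shows $[\chi]\in\Sigma^n(G)$, distinguishing the cases $\chi(Z(G))\neq 0$ (Theorem~\ref{teo.sigma}(4)) and $\chi(Z(G))=0$ (embedding $\operatorname{im}\chi$ into $Z(G)$ and invoking the kernel statement), and then concludes via Theorem~\ref{teo.sigma}(1). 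You instead avoid $\Sigma$-theory entirely: after a commensurability reduction you realize a finite-index overgroup of $NZ(G)$ directly as the pullback of such a kernel, push the finiteness property down to $NZ(G)$, decompose $NZ(G)=N\times Z'$, and extract $N$ as a retract. Your route is more hands-on and self-contained, but longer; the paper's route is shorter and fits the ambient $\Sigma$-invariant framework, at the cost of invoking the full Bieri--Renz machinery. One minor simplification available to you: in the last step, rather than appealing to closure of $\F_n$ under retracts, you could simply apply Proposition~\ref{basicfinprop}(2) to the extension $1\to Z'\to NZ(G)\to N\to 1$, since $Z'$ is finitely generated abelian.
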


Aiming to answer Question \ref{question}, we use Theorem \ref{FnFPaCenter} to establish the following result for when the center of the group is a direct factor.

\begin{mainthm}\label{cor.dpdim}
Let $H$ be a centerless group and let $A$ be a finitely generated abelian group. Then the following are equivalent:
\begin{enumerate}
    \item  $G\coloneqq H \times A$ has $\FnFPa$ (resp. $\FPnFPa$) property;
    \item  $H$  has $\FnFPa$ (resp. $\FPnFPa$) property and $\ker(\chi _{|\Fix \psi})$ is of type $\F_n$ (resp. $\FP_n$) for every homomorphism $\chi \colon H \to \BR$
    such that $\rk_{\BZ} \im \chi\leq \rk_{\BZ} A$ and for all $\psi \in \Aut(H)$.
\end{enumerate}
\end{mainthm}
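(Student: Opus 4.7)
The plan is to apply Theorem \ref{FnFPaCenter} to $G = H \times A$, reducing the condition on the kernels of $\theta$ to the characterwise condition in item (2) via the Bieri--Renz characterization of finiteness properties in terms of Sigma-invariants. Since $H$ is centerless, $Z(G) = A$ and $G/Z(G) \cong H$; an elementary argument shows that any $\phi \in \Aut(G)$ has the form $\phi(h,a) = (\psi(h), \alpha(h) + \gamma(a))$ for some $\psi \in \Aut(H)$, $\gamma \in \Aut(A)$, $\alpha \in \Hom(H, A)$, so that $\bar\phi$ is identified with $\psi$ and $I_\phi = \im(\gamma - \Id_A)$. A direct calculation yields, with $\beta \coloneqq \alpha + \nu$,
\[ \theta \colon \Fix\psi \to A/I_\phi, \qquad h \mapsto \beta(h) + I_\phi, \]
and as $\phi$ runs over $\Aut(G)$ with $\psi$ fixed and $\nu$ over $\Hom(H, A)$, $\beta$ attains every element of $\Hom(H, A)$ and $\gamma$ every element of $\Aut(A)$.

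For the direction (1) $\Rightarrow$ (2), the $\FnFPa$ property of $H$ follows on taking $\phi_0 = \psi \times \Id_A$: then $\Fix \phi_0 = \Fix\psi \times A$ is $\F_n$ by (1), whence so is its retract $\Fix\psi$. For the kernel condition, given $\chi \colon H \to \BR$ with $d \coloneqq \rk_\BZ \im\chi \leq \rk_\BZ A$, the image $\im\chi$ is free abelian of rank $d$ and embeds into $A$ via some $j$; setting $\beta \coloneqq j \circ \chi$ gives $\ker(\beta|_{\Fix\psi}) = \ker(\chi|_{\Fix\psi})$, and choosing $\alpha = \beta$, $\gamma = \Id_A$, $\nu = 0$ realizes this as $\ker\theta$, which is $\F_n$ by Theorem \ref{FnFPaCenter}.

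For the direction (2) $\Rightarrow$ (1), I verify the hypotheses of Theorem \ref{FnFPaCenter}: $G$ is $\F_n$ (product of $H$, of type $\F_n$ by (2), and of the $\F_\infty$ group $A$), and $Z(G) = A$ is finitely generated. It remains to prove $\ker\theta$ is $\F_n$ for every $\phi, \nu$. Since $\Fix\psi$ is $\F_n$ (from (2)) and $\Fix\psi/\ker\theta \hookrightarrow A/I_\phi$ is abelian, Bieri--Renz reduces this to $S(\Fix\psi, \ker\theta) \subseteq \Sigma^n(\Fix\psi)$. Given $[\chi_0]$ in this set, $\chi_0$ factors through $\im\theta \leq A/I_\phi$; by divisibility of $\BR$, the induced map extends to $\tilde\chi \colon A/I_\phi \to \BR$, and composing with the projection $\pi \colon A \to A/I_\phi$ and with $\beta$ yields $\chi \coloneqq \tilde\chi \circ \pi \circ \beta \colon H \to \BR$ satisfying $\chi|_{\Fix\psi} = \chi_0$ and $\rk_\BZ \im\chi \leq \rk_\BZ (A/I_\phi) \leq \rk_\BZ A$. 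Condition (2) then gives that $\ker\chi_0$ is $\F_n$, whereupon a second use of Bieri--Renz places $[\chi_0]$ in $\Sigma^n(\Fix\psi)$, as wanted. The $\FP_n$ case proceeds identically, with $\Sigma^n(-, \BZ)$ in place of $\Sigma^n(-)$. The main technical point, and the source of the precise rank bound appearing in (2), is this extension-of-characters step: one needs to lift $\chi_0$ from $\Fix\psi$ to all of $H$ through $A/I_\phi$ using divisibility of $\BR$, with the image rank tightly controlled by $\rk_\BZ A$.
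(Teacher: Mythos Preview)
Your proof is correct. The direction $(1)\Rightarrow(2)$ matches the paper's argument essentially line for line: both embed $\im\chi$ into $A$ and apply the main criterion (you invoke Theorem~\ref{FnFPaCenter} directly; the paper routes through Corollary~\ref{teo.gammaid}, which amounts to the same thing).

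The direction $(2)\Rightarrow(1)$, however, takes a genuinely different route from the paper. Both arguments reduce via Theorem~\ref{FnFPaCenter} to showing that $\ker\theta$ is of type $\F_n$, where $\theta=\pi\circ\beta_1\colon\Fix\psi\to A/I_\phi$ with $\beta=\alpha+\nu$. The paper picks a \emph{single} homomorphism $\rho\colon A/I_\phi\to\BR$ with finite kernel, sets $\chi=\rho\circ\pi\circ\beta$, applies $(2)$ once to get $\ker(\chi|_{\Fix\psi})$ of type $\F_n$, and then shows by an elementary injectivity argument that $\ker\theta=\beta_1^{-1}(I_\phi)$ has finite index in $\ker(\chi|_{\Fix\psi})$; no further use of $\Sigma$-invariants is needed. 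You instead invoke the Bieri--Renz criterion to reduce to $S(\Fix\psi,\ker\theta)\subseteq\Sigma^n(\Fix\psi)$, and for \emph{each} $[\chi_0]$ in that set you extend $\chi_0$ through $A/I_\phi$ (using divisibility of $\BR$) to a character $\chi$ on $H$ of rank at most $\rk_\BZ A$, apply $(2)$ to obtain $\ker\chi_0$ of type $\F_n$, and feed this back into Bieri--Renz to conclude $[\chi_0]\in\Sigma^n(\Fix\psi)$. Your approach is more uniform within the $\Sigma$-invariant framework and makes the origin of the rank bound $\rk_\BZ\im\chi\le\rk_\BZ A$ fully transparent; the paper's finite-index argument is more elementary, using only that $\F_n$ passes to finite-index subgroups rather than the full $\Sigma^n$ machinery.
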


By using Theorem C we are able to find two examples of groups that give a negative answer to Question \ref{question}.

This paper is structured as follows. In Section \ref{sec.prel} we will establish some preliminary results we need. In Section \ref{sec.gen} we prove Theorem \ref{Fix.Center} and Corollary \ref{finiteorder}; in Section \ref{sec.fgfpa} we prove Theorem \ref{FnFPaCenter} and Corollary \ref{corfnfpa.sigma}; in Section \ref{sec.dp} we study the case where the center is a direct factor and prove Theorem \ref{cor.dpdim}. Finally, in Section \ref{sec.ex} we answer Question \ref{question}.

\section{Preliminaries}\label{sec.prel}
A group $G$ is said to be of type $\F_n$ if there is a $K(G,1)$-complex with finite $n$-skeleton. It is well known that $\F_1$ is equivalent with $G$ being finitely generated, and $\F_2$ coincides with $G$ being finitely presentable. 

For an arbitrary ring $R$, an $R$-module $A$ is said to be of type $\FP_n$ if it admits a projective resolution
\[ \cdots \to P_2 \to P_1 \to P_0 \to A \to 0\]
with $P_k$ finitely generated for all $k \leq n$. Specializing to $R = \BZ G$ and $A = \BZ$, we obtain the definition of a group of type $\FP_n$.

Again, $\FP_1$ coincides with $G$ being finitely generated, but $\FP_2$ is strictly weaker than finitely presentability, as shown by Bestvina and Brady \cite{BestvinaBrady1997}. It is also well known that $\F_n$ implies $\FP_n$ and that $\F_{n+1}$ (resp. $\FP_{n+1}$) implies $\F_n$ (resp. $\FP_n$) for all $n$. We also say a group is of type $\F_{\infty}$ (resp. $\FP_{\infty}$) if it  is of type $\F_n$ (resp. $\FP_n$) for all $n$. The easiest examples of groups of type $\F_{\infty}$ are finitely generated free groups and finitely generated abelian groups.

We recall some other well known results about these properties.
\begin{prop}\label{basicfinprop}
    Let $1\to A\to B\to C\to 1$ be a short exact sequence of groups. 
    \begin{enumerate}
        \item If $A$ and $C$ are of type $\F_n$ (resp. $\FP_n$) then $B$ is of type $\F_n$ (resp. $\FP_n$);
        \item 
        If $A$ is of type $\F_{n-1}$ (resp. $\FP_{n-1}$) and $B$ is of type $\F_n$ (resp. $\FP_n$) then $C$ is of type $\F_n$ (resp. $\FP_n$).
           \end{enumerate}
\end{prop}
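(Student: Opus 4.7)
The plan is to treat this as a classical result whose proof assembles standard techniques from Bieri's monograph and Brown's \emph{Cohomology of Groups}; the aim is to sketch the ideas rather than carry out full bookkeeping. I would handle the homological ($\FP_n$) version first and upgrade to the topological ($\F_n$) case via classifying spaces.

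For item (1), the strategy is to combine a $\BZ A$-resolution and a $\BZ C$-resolution into a $\BZ B$-resolution. The key observation is that $\BZ B$ is free as a right $\BZ A$-module via any set-theoretic section of $B\onto C$. Hence, if $Q_\ast \to \BZ$ is a $\BZ A$-projective resolution with $Q_i$ finitely generated for $i\le n$, then $F_\ast \coloneqq \BZ B \otimes_{\BZ A} Q_\ast$ is a $\BZ B$-projective resolution of $\BZ C$ (viewed as a $\BZ B$-module by inflation along $B\onto C$) with finitely generated modules in degrees $\le n$. Now take a $\BZ C$-projective resolution $P_\ast\to \BZ$ with $P_i$ finitely generated for $i\le n$. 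A Cartan--Eilenberg-style double complex construction lifts each $P_i$ (viewed as a $\BZ B$-module by inflation) through copies of $F_\ast$, and the totalization produces a $\BZ B$-projective resolution of $\BZ$ that is finitely generated in degrees $\le n$, proving $B$ is of type $\FP_n$. The $\F_n$ version is obtained topologically: from finite $n$-skeleta of $K(A,1)$ and $K(C,1)$ one assembles a $K(B,1)$ via a bar-construction fibration whose $n$-skeleton is again finite.

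For item (2), assume $B$ is $\FP_n$ and $A$ is $\FP_{n-1}$. I would invoke Bieri's characterization of $\FP_n$ as commutation of the functors $H_k(\,\cdot\,, -)$ with arbitrary direct products for $k \le n$. Applied to the Lyndon--Hochschild--Serre spectral sequence $E^2_{p,q} = H_p(C, H_q(A, -)) \Rightarrow H_{p+q}(B, -)$, the hypotheses on $A$ and $B$ transfer the product-commutation property to $H_p(C, -)$ for $p\le n$, proving $C$ is of type $\FP_n$. An alternative route is induction on $n$: the base case $n=1$ follows from the surjection $B\onto C$ together with finite generation of $B$, and the inductive step proceeds by dimension shifting along a partial resolution. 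The $\F_n$ statement is then recovered via the same topological model discussed above, restricted to the low-dimensional skeleta.

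The principal obstacle lies in the careful bookkeeping inside the double complex of item (1), where one must simultaneously track finite generation, projectivity, and acyclicity under both induction from $\BZ A$ to $\BZ B$ and inflation from $\BZ C$ to $\BZ B$. A second subtle point is the transition from the $\FP_n$ to the $\F_n$ statement, since $\F_n$ is strictly stronger than $\FP_n$ for $n \ge 2$; one must verify that the topological construction genuinely produces a finite $n$-skeleton rather than merely a finitely generated cellular chain complex.
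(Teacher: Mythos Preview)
The paper does not supply its own argument for this proposition; its entire proof is a citation to Geoghegan's \emph{Topological Methods in Group Theory}. Your sketch is therefore already more detailed than what the paper offers, and the strategies you outline---the double-complex (Cartan--Eilenberg) assembly for item~(1) and the Bieri product-commutation criterion combined with the Lyndon--Hochschild--Serre spectral sequence for item~(2)---are precisely the standard routes one finds in Bieri's and Brown's monographs, which the paper also lists as general references for these facts.

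One point worth tightening is your passage from $\FP_n$ to $\F_n$ in item~(2), which you leave as ``the same topological model \ldots\ restricted to the low-dimensional skeleta''. A quotient-space construction does not obviously yield a finite $n$-skeleton for $K(C,1)$. The cleaner route is to use the equivalence $\F_n \Leftrightarrow \FP_n \text{ and finitely presented}$ for $n\ge 2$: having established the $\FP_n$ statement, finite presentability of $C$ follows because $B$ is finitely presented and $A$ is finitely generated (as $\F_{n-1}$ with $n\ge 2$ implies $\F_1$), so $C = B/A$ is finitely presented. This sidesteps the skeleton bookkeeping entirely.
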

\begin{proof}
    \cf \cite{geoghegan2008topological}
\end{proof}
It is also known that properties $\F_n$ and $\FP_n$ pass to and from finite index subgroups. For more general information about $\F_n$ and $\FP_n$ properties we refer the reader to \cite{Bieri, Brown, geoghegan2008topological}.

Next, we define the $\Sigma$-invariants. For $G$ being a finitely generated group, its character sphere $S(G)$ is the set of non-zero homomorphisms  $\chi \colon G \to \BR$ modulo the equivalence relation where $\chi_1 \sim \chi_2$ when $\chi_2 = r \chi_1$ for some $r \in \BR_{>0}$.  For $\chi \colon G \to \BR$ we define the submonoid $G_{\chi} = \{ g \in G \mid \chi(g) \geq 0 \}$, and the homological $\Sigma$-invariants are defined simply as
\[\Sigma^n(G,\BZ) = \{[\chi] \in S(G) \mid \BZ \textrm{ is of type $\FP_n$ as $\BZ G_{\chi}$-module}\}.\]

For the homotopical counterparts $\Sigma^n(G)$, we will define just $\Sigma^1$ and $\Sigma^2$ and resort to the formula $\Sigma^n(G) = \Sigma^2(G) \cap \Sigma^n(G,\BZ)$ for $n \geq 2$ (\cite{BieriRenz1988}). 

Let $X$ be a finite generating set of $G$, and let $\Cay(G,X)$ be the associated Cayley graph. For $[\chi] \in S(G)$, we consider the full subgraph $\Cay(G,X)_{\chi}$ spanned by the vertices in $G_\chi$. We put
\[\Sigma^1(G) = \{ [\chi] \in S(G) \mid \Cay(G,X)_{\chi} \hbox{ is a connected graph}\}.
\]
We can define similarly the invariant $\Sigma^2(G)$. Suppose that $G$ is finitely presented and let $\mathcal{C}$ be the Cayley complex of $G$ associated with a finite presentation $G = \langle X \mid R \rangle$. For any character $\chi \colon G \to \BR$, the subset $G_{\chi} \subset G$ determines a full subcomplex $\mathcal{C}_{\chi}$ of $\mathcal{C}$. By definition
\[
\Sigma^2(G) = \{ [\chi] \in S(G) \mid \mathcal{C}_{\chi} \hbox{ is 1-connected for some finite presentation } \langle X \mid R\rangle  \hbox{ of } G\}.
\]

We say that a character $[\chi]\in S(G)$ is discrete if $\im\chi\simeq \BZ$. In the following theorem, we collect some basic results on the $\Sigma$-invariants that we need.

\begin{teo}[\cite{BNS1987, BieriRenz1988}]\label{teo.sigma}\label{sigma1center}\label{sigma1kerchar}\label{sigma1app}\label{teo.sigma.free}
    Let $G$ be a group of type $\F_n$  and let $\chi \colon G\to \BR$ be a non-trivial homomorphism.

\begin{enumerate}

   \item 
    Let $H$ be a subgroup of $G$ containing $G'$. Then $H$ is of type $\F_n$ if and only if
    \[S(G,H) \coloneqq \{[\chi]\in S(G) \tq \chi(H)=0\}\subset \Sigma^n(G).\]
     In particular,  $S(G)=\Sigma^n(G)$ if and only if $G'$ is of type $\F_n$;

    \item 
    Suppose $[\chi]$ is discrete. Then  $\ker \chi$ is of type $\F_n$   if and only if 
    $\{\chi, -\chi\}\subset \Sigma^n(G)$; 

    \item 
    If $H\leq G$ is a subgroup of finite index then $[\chi_{|H}] \in \Sigma^n(H)$ if and only if  $[\chi] \in \Sigma^n(G)$;

   \item 
    If $\chi(Z(G))\neq 0$ then $[\chi]\in \Sigma^n(G)$;

     \item 
     If $G$ is free then $\Sigma^n(G)=\emptyset$.
  
\end{enumerate}
    
\end{teo}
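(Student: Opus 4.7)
The plan is to prove item (1), the Bieri--Renz theorem, first, since items (2) and (3) follow from it together with routine finiteness-property manipulations, while (4) and (5) require partly independent arguments. For (1) I would use Brown's criterion: $[\chi] \in \Sigma^n(G,\BZ)$ is equivalent to $\BZ$ admitting a $\BZ G_\chi$-projective resolution that is finitely generated through degree $n$, and the homotopical version is captured by a filtration of a finite-type $K(G,1)$ by the $\chi$-values on a lift of its $n$-skeleton. Given this setup, if every $[\eta] \in S(G,H)$ lies in $\Sigma^n(G)$, then since $S(G,H)$ is compact and $\Sigma^n(G)$ is open in $S(G)$, a finite subcover yields directional data that assembles into a partial classifying space for $H$ finite through dimension $n$. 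Conversely, a single bad class $[\eta] \in S(G,H) \setminus \Sigma^n(G)$ produces infinitely many essential cells in any partial resolution for $H$, obstructing the required finiteness.

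Items (2), (3), and (4) then follow quickly from (1). For (2), take $H = \ker\chi$ and observe that when $[\chi]$ is discrete, the characters vanishing on $H$ are precisely the non-zero real scalar multiples of $\chi$, so $S(G,\ker\chi) = \{[\chi], [-\chi]\}$, and (1) specializes to the stated biconditional. For (3), the submonoid $H \cap G_\chi$ has finite index in $G_\chi$, and finite-index inclusions preserve the $\FP_n$-property of the trivial module $\BZ$, so $[\chi] \in \Sigma^n(G)$ and $[\chi|_H] \in \Sigma^n(H)$ are equivalent. For (4), central multiplication by $z$ with $\chi(z) > 0$ shifts $\mathcal{C}_\chi$ boundedly into itself, and since $z$ is central, its iterated translates assemble into a telescoping retraction producing the required $(n-1)$-connectivity of $\mathcal{C}_\chi$.

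For item (5), one first proves $\Sigma^1(F_k) = \emptyset$ for $k \geq 2$ via a direct Cayley-tree analysis, as in the original BNS paper: for any non-zero character $\chi$, a reduced-word argument exhibits elements of $G_\chi$ that cannot be joined to the identity inside $G_\chi$, because the tree structure forces any path to traverse a generator with negative $\chi$-value at some bottleneck. The higher invariants satisfy $\Sigma^n(G) \subset \Sigma^1(G)$, so they too are empty. The principal obstacle throughout is item (1) itself, whose complete proof is a delicate piece of equivariant obstruction theory combining Brown's criterion with Morse-theoretic filtrations of the universal cover; items (2)--(5) are comparatively routine specializations once (1) is in hand.
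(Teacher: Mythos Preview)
The paper does not prove this theorem at all: it is stated as a compilation of known results from \cite{BNS1987} and \cite{BieriRenz1988} and used as a black box throughout. So there is no ``paper's own proof'' to compare against; your proposal is effectively a sketch of the original literature arguments.

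As such a sketch, your outline is broadly accurate. A couple of small corrections. First, you announce that items (2)--(4) all follow from (1), but your actual argument for (3) does not use (1), and indeed it cannot: the subgroup in (3) is merely of finite index and need not contain $G'$, so (1) does not apply. Your direct argument for (3) via the module-theoretic behaviour of $\BZ G_\chi$ over $\BZ H_{\chi|_H}$ is the right one; just drop the claim that it is a consequence of (1). Second, for (5) you treat only free groups of rank $k \geq 2$; note that the statement as written in the paper is literally false for $G = \BZ$ (where $\Sigma^n(\BZ) = S(\BZ)$), so the intended hypothesis is ``non-abelian free'', which is also how the paper uses it in Examples~\ref{conterex} and~\ref{exartin}. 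Finally, your honest acknowledgement that item (1) is the substantive part and that your description of it is only a gesture at Brown's criterion plus openness/compactness is appropriate: the genuine content lies there, and the remaining items are indeed routine once (1), (4) and the rank-$\geq 2$ case of (5) are available.
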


In Theorem \ref{teo.sigma}, we may replace $\F_n$ with $\FP_n$ and $\Sigma^n(G)$ with $\Sigma^n(G,\BZ)$ and find the appropriate homological counterparts.

\section{Fixed subgroups and the center}\label{sec.gen}

Let $G$ be a group. From now on, for $\phi \in \Aut(G)$ we will denote by $\bar{\phi}$ the automorphism of $G/Z(G)$ induced by $\phi$. Let
\[ I_\phi \coloneqq \{ z^{-1} \phi(z) \mid z \in Z(G)\} \subseteq Z(G).\]
Notice that $I_\phi$ is actually a subgroup of $Z(G)$, since if $z_1,z_2\in Z(G)$ then
$$z_1^{-1}\phi(z_1)\left(z_2^{-1}\phi(z_2)\right)^{-1}=(z_1z_2^{-1})^{-1}\phi(z_1z_2^{-1}).$$

We also define the map 
\begin{align*}
  \epsilon_{\phi} \colon \Fix\bar{\phi} &\to Z(G)/I_\phi\\
  gZ(g) &\mapsto g^{-1}\phi(g)I_\phi.
\end{align*}

Note that $\epsilon_\phi$ is well defined on $\Fix\bar{\phi}$, but not on $G/Z(G)$ in general. Indeed, for $gZ(G) \in \Fix \bar{\phi}$ we have $g^{-1}\phi(g) \in Z(G)$ and for $z \in Z(G)$ the elements $g^{-1} \phi(g)$ and $(gz)^{-1}\phi(gz)$ represent the same class modulo $I_\phi$. Moreover, using that the elements $\{g^{-1} \phi(g)\}$  are central in $G$, we have
 \[\epsilon_\phi( ghZ(G) ) = (gh)^{-1} \phi(gh)I_\phi = h^{-1} (g^{-1} \phi(g))\phi(h)I_\phi = (g^{-1} \phi(g)) (h^{-1} \phi(h)) I_\phi,\]
  for $gZ(G),hZ(G) \in \Fix\bar{\phi}$, so $\epsilon_\phi$ is a homomorphism.

Note that $P_\phi \coloneqq  \{ gZ(G) \in G/Z(G) \mid g^{-1} \phi(g) \in I_\phi\}=\ker \epsilon_{\phi} \lhd \Fix\bar{\phi}$.

\begin{proof}[Proof of Theorem \ref{Fix.Center}]

We prove the topological version since the homological one is similar.

 Denote by $\pi \colon G \to G/Z(G)$ the canonical projection. We have an exact sequence
 \[ 1 \to Z(G) \cap \Fix\phi \to \Fix\phi \to \pi(\Fix \phi) \to 1.\]
 Since $Z(G) \cap \Fix \phi\leq Z(G)$ is finitely generated abelian, it follows from Proposition \ref{basicfinprop} that $\Fix \phi$ is $\F_n$ if and only if $\pi(\Fix \phi)$ is so.

 We have by construction
 \[ \pi(\Fix \phi)=\{ gZ(G) \in G/Z(G) \mid \exists z \in Z(G) \text{ such that } \phi(gz) = gz\}.\]
 In the situation above, $g^{-1} \phi(g) = z \phi(z)^{-1} = (z^{-1})^{-1} \phi(z^{-1}) \in I_\phi$, so $\pi(\Fix \phi)=P_\phi$.

As $\im \epsilon_\phi$ is finitely generated abelian, $P_\phi=\ker \epsilon_{\phi}$ being $\F_n$ implies that $\Fix \bar{\phi}$ is too, by Proposition \ref{basicfinprop}. So $(i)$ and $(ii)$ are equivalent.

 The equivalence of $(ii)$ and $(iii)$ follows from Theorem~\ref{sigma1app}: the subgroup $P_\phi$ is $\F_n$ if and only if for all $[\chi] \in \Sigma^1(\Fix\bar{\phi})^c$ there is $p \in P_\phi$ such that $\chi(p) \neq 0$, that is, there is $g \in G$ such that $\chi(gZ(G)) \neq 0$ and $g^{-1}\phi(g) \in I_\phi$.
 \end{proof}

\begin{proof}[Proof of Corollary \ref{finiteorder}]
By Theorem~\ref{Fix.Center} it is enough to show that if $\phi$ is of finite order and $\Fix \bar{\phi}$ is of type $\F_n$ then  $P_\phi$ is a finite index subgroup of $\Fix \bar{\phi}$.

First notice that $z^{-1} \phi^k(z) \in I_\phi$ for all $k \geq 1$ and $z \in Z(G)$. For $k=1$ this is just the definition, and for $k>1$ we use induction: $z^{-1} \phi^k(z) = z^{-1} \phi^{k-1}(z) z_2^{-1} \phi(z_2) \in I_\phi$, where $z_2 = \phi^{k-1}(z) \in Z(G)$.

Now assume that $\phi^m = \Id$. If $gZ(G) \in \Fix \bar{\phi}$ (so that $g^{-1} \phi(g) \in Z(G)$), we have:
\begin{align*}
   1 = g^{-1} \phi^m(g) &= g^{-1} \phi(g) \phi(g^{-1}) \phi^2(g) \phi^2(g^{-1}) \cdots \phi^{m-1}(g) \phi^{m-1}(g^{-1}) \phi^m(g) \\
   & = z \phi(z) \phi^2(z) \cdots \phi^{m-1}(z),
\end{align*}
where $z = g^{-1}\phi(g) \in Z(G)$. It follows then that
\[z^{-m} = z^{-1} \phi(z) \cdot z^{-1}\phi^2(z) \cdots z^{-1} \phi^{m-1}(z) \in I_\phi.\]
Thus for all $gZ(G) \in \Fix \bar{\phi}$ we have \[\epsilon_\phi(g^m Z(G)) = \epsilon_\phi(gZ(G))^m = (g^{-1}\phi(g))^m I_\phi = I_\phi.\] 
This proves that $\im \epsilon_\phi$ is an abelian group of exponent at most $m$. It is also finitely generated, as it is a quotient of $\Fix \bar{\phi}$, thus it is finite. So $P_\phi = \ker(\epsilon_\phi)$ has finite index in $\Fix \bar{\phi}$.
\end{proof}
\section{Property \texorpdfstring{$\FGFPa$}{FGFPa} and generalizations}\label{sec.fgfpa}

 \begin{proof}[Proof of Theorem \ref{FnFPaCenter}]
 Again we prove only the topological version. Suppose the statement about kernels is true and let $\phi\in\Aut G$. Note that $\theta = \epsilon_\phi + \pi\circ \nu_{|\Fix\bar{\phi}}$, where $\pi \colon Z(G)\to Z(G)/I_{\phi}$ is the projection.  By taking $\nu$ to be the trivial homomorphism, we have that $P_\phi = \ker(\epsilon_\phi)$ is of type $\F_n$. Since $P_{\phi}=\ker \epsilon_{\phi}$ and $\im\phi$ is finitely generated abelian then $\Fix \bar{\phi}$ is also of type $\F_n$ by Theorem \ref{basicfinprop}, hence Theorem~\ref{Fix.Center} implies $\Fix \phi$ is of type $\F_n$. Since that is true for all $\phi\in\Aut G$, then $G$ satisfies $\FnFPa$.

 Conversely, assume that $G$ has $\FnFPa$. Let $\phi\in \Aut G$ and $\nu \colon G/Z(G) \to Z(G)$ be a homomorphism. Denote by $\mu \colon G \to Z(G)$ its lift to $G$, and consider the homomorphism given by
 \[ \psi \colon G \to G, \  \ \psi(g)= \phi(g) \mu(g).\]
 It has an inverse given by the map
 \[ \eta \colon G \to G, \  \ \eta (g) \coloneqq \phi^{-1}(g)  \phi^{-1} \mu  \phi^{-1}(g^{-1}). \]
Indeed, using that $\mu(z) = 1$ for all $z \in Z(G)$, so that in particular $\mu \phi^{-1} \mu(g) = 1$ for all $g \in G$, we have
\begin{align*}
 \eta \psi (g) &= \eta(\phi(g)\mu(g)) \\
  & = \phi^{-1}(\phi(g)) (\phi^{-1} \mu  \phi^{-1}(\phi(g)))^{-1}   \cdot \phi^{-1}(\mu(g)) (\phi^{-1} \mu  \phi^{-1}(\mu(g)))^{-1} \\
 & = g (\phi^{-1} \mu  (g))^{-1} \cdot \phi^{-1}(\mu(g))\\
 & =g,
\end{align*}
and similarly $\psi \eta = \Id$. So $\psi \in \Aut(G)$. By hypothesis $\Fix(\psi)$ is of type $\F_n$, thus by Theorem~\ref{Fix.Center} so is $P_\psi$, where
\[ P_\psi = \{ gZ(G) \in G/Z(G) \mid g^{-1}\phi(g)\mu(g) \in I_\psi\} .\]
As $I_\phi = I_\psi$ and $\mu(g) = \nu(gZ(G))$, we see that $P_\psi= \ker(\theta)$.
\end{proof}

\begin{proof}[Proof of Corollary \ref{corfnfpa.sigma}]
We prove the topological version. Assuming $G$ has $\FnFPa$, let $\phi = \Id$ in Theorem~\ref{FnFPaCenter}. Then $I_\phi$ is the 
trivial subgroup, $\epsilon_\phi$ is the trivial map and $\Fix \bar{\phi} = G/Z(G)$, so the theorem's statement implies any 
homomorphism $\nu \colon G/Z(G) \to Z(G)$ has kernel of type $\F_n$.

Assuming $G'\leq N \leq G$ and  $\rk_{\BZ} G/N \leq \rk_{\BZ} Z(G)$, let $\chi \colon G \to \BR$ be a non-trivial homomorphism such that $\chi(N) = 0$. Then $\rk_{\BZ} \im \chi \leq \rk_{\BZ} G/N \leq \rk_{\BZ} Z(G)$.

If $\chi(Z(G)) \neq 0$, then $[\chi] \in \Sigma^n(G)$ by Theorem~\ref{sigma1center}. Otherwise, we consider the induced homomorphism $\bar{\chi} \colon G/Z(G) \to \BR$. By composing with an embedding $\iota  \colon \im \chi \to Z(G)$, we see that $\ker \bar{\chi}=\ker \iota \circ \bar{\chi}$ has type $\F_n$ by the beginning of the proof, and since $Z(G)$ is finitely generated, we find that $\ker \chi$ is of type $\F_n$ too, by Theorem \ref{basicfinprop}. Hence $[\chi] \in \Sigma^n(G)$ by Theorem \ref{teo.sigma}. 

Since $[\chi]$ was arbitrarily chosen, by Theorem~\ref{sigma1app} we find that $N$ is of type $\F_n$.
\end{proof}

\begin{ex}
    Consider $G$ as being the pure braid group (on two strings) of the Klein bottle, which may be written as $P_2(\mathbb{K})\simeq F_2\rtimes (\BZ \rtimes \BZ)$, the semidirect product of the free group $F_2=\langle x, y\rangle$ with $\BZ\rtimes \BZ=\langle a, b \tq ab=ba^{-1}\rangle$, equipped with the following action:

\begin{align*}
     \quad a^{-1}za &= \begin{cases}
        x & \text{if } z = x, \\
        x^{-2}y & \text{if } z = y;
    \end{cases}
    &
     \quad b^{-1}zb &= \begin{cases}
        x^{-1} & \text{if } z = x, \\
        xyx & \text{if } z = y.
    \end{cases}
\end{align*}

It is known that $Z(P_2(\mathbb{K}))=\langle b^2 \rangle$, $S(P_2(\mathbb{K}))\simeq S^1$ and $\Sigma^1(P_2(\mathbb{K}))^c=\{[\chi], [-\chi]\}$, where $\chi(x)=\chi(a)=\chi(b)=0$ and $\chi(y)=-1$. The reader may check all these facts on \cite{CarolinaWagner}, where the authors calculate the invariant.

Let $N\coloneqq \ker \chi$. Obviously $G'\leq N\leq G$ and $\rk_{\BZ} G/N= 1\leq 1=\rk_{\BZ}Z(G)$, but since $[\chi]\notin \Sigma^1(G)$ then $N$ is not finitely generated by Theorem \ref{teo.sigma}. That implies $P_2(\mathbb{K})$ is not $\FGFPa$, by Corollary \ref{corfnfpa.sigma}.

Note that the center of $P_2(\mathbb{K})$ is not a direct factor of the group (in contrast with the classical pure braid group of the disk), so the conclusion does not follow from Theorem \ref{teoLeiMaZhang}. 
\end{ex}

\section{Center as a direct factor} \label{sec.dp}

In this section we consider the case where the center of $G$ is a direct factor, \ie, $G$ is the direct product of a centerless group $H$ and a finitely generated abelian group $A$. Inspired by \cite{LeiMaZhang}, our goal here is, for each $\phi\in \Aut G$, to try to determine finiteness properties of $\Fix \phi$ based on finiteness properties of $\Fix \phi_{|{H\times 1}}$. 

\begin{lema}\label{lemaauto}
    Let $H$ be a centerless group and $A$ be a finitely generated abelian group. Then every automorphism $\phi \colon H\times A \to H \times A$ has the following form:
    $$\phi(h,v)=\left( \psi(h), \alpha(h) + \gamma(v) \right),\quad (h,v)\in H\times A,$$
     where $\psi \colon H \to H$ and $\gamma \colon A\to A$ are automorphisms, and $\alpha \colon H \to A$ is a homomorphism. 
\end{lema}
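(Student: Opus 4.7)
The plan is to exploit the fact that $A$ is canonically identified with the center $Z(H\times A)$, since $H$ is centerless. Because the center is a characteristic subgroup, any automorphism $\phi$ must restrict to an automorphism of $A$, which will be our $\gamma$. The rest is then a matter of decomposing $\phi(h,v)=\phi(h,0)\cdot\phi(1,v)$ and reading off $\psi$ and $\alpha$ from the two coordinates of $\phi(h,0)$.

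More concretely, I would first observe $Z(H\times A)=Z(H)\times Z(A)=\{1\}\times A$, so $\phi(\{1\}\times A)=\{1\}\times A$; defining $\gamma\colon A\to A$ by $\phi(1,v)=(1,\gamma(v))$ gives an automorphism of $A$ (it is a group homomorphism as the restriction of $\phi$, and bijective because $\phi$ is). Next, for $h\in H$ write $\phi(h,0)=(\psi(h),\alpha(h))$, which automatically defines homomorphisms $\psi\colon H\to H$ and $\alpha\colon H\to A$ since $\phi$ is one and the two projections $H\times A\to H$ and $H\times A\to A$ are homomorphisms. Then, using that $\phi(1,v)$ is central,
\[\phi(h,v)=\phi(h,0)\phi(1,v)=(\psi(h),\alpha(h))(1,\gamma(v))=(\psi(h),\alpha(h)+\gamma(v)),\]
which is the desired formula.

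The one nontrivial step is to verify that $\psi$ is an automorphism of $H$; this is where the centerless hypothesis really matters. For surjectivity, given $h\in H$, surjectivity of $\phi$ yields $(h',v')$ with $\phi(h',v')=(h,0)$, whose first coordinate is $\psi(h')=h$. For injectivity, suppose $\psi(h)=1$. Then $\phi(h,0)=(1,\alpha(h))\in \{1\}\times A=Z(H\times A)$. Since the center is characteristic, $\phi^{-1}$ also preserves it, so $(h,0)\in Z(H\times A)=\{1\}\times A$, forcing $h=1$. Hence $\psi\in\Aut(H)$.

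I expect no real obstacle here beyond remembering that the characteristic nature of the center is what underwrites both the existence of $\gamma$ and the injectivity of $\psi$; everything else is straightforward bookkeeping.
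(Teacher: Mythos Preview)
Your proof is correct; the paper itself does not spell out an argument but simply cites \cite[Proposition~2.3]{LeiMaZhang} (stated there for $A=\BZ^n$) and remarks that the same proof applies, and your write-up is precisely that standard argument via the characteristic subgroup $Z(H\times A)=\{1\}\times A$.
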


\begin{proof}
    This is essentially \cite[Proposition 2.3]{LeiMaZhang}, just swapping $\BZ^n$ for $A$ finitely generated abelian. The same proof applies.
\end{proof}

 From now on in this section we write $\phi=(\psi,\alpha,\gamma)$ for the automophism $\phi$ as in Lemma \ref{lemaauto}.

     \begin{cor} \label{cordirprod}
   Let $H$ be a group of type $\F_n$ (resp. $\FP_n$) with $Z(H) = 1$ and let $\phi=(\psi,\alpha,\gamma) \colon H\times A\to H\times A$ be an automorphism, where $A$ is finitely generated abelian. Then the following assertions are equivalent:
\begin{enumerate}
    \item  $\Fix \phi$ is of type  $\F_n$ (resp. $\FP_n$),
     \item $\Fix \psi$ and $P_{\phi} = \{ h \in \Fix(\psi)  \mid \exists a \in A \text{ such that } \alpha(h) = \gamma(a) -a\}$ are of type $\F_n$ (resp. $\FP_n$),
\item $\Fix\psi$ is of type $\F_n$ (resp. $\FP_n$) and for each $\chi\in \Sigma^1(\Fix \psi)^c$ (resp. $\Sigma^1(\Fix \psi,\BZ)^c$) there is $(h,a)\in \Fix \psi\times A$ such that $\chi(h)\neq 0$ and $\alpha(h)=(\gamma - \Id)(a)$.
\end{enumerate}
\end{cor}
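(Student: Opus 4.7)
The plan is to obtain this as a direct specialization of Theorem \ref{Fix.Center}. With $G = H \times A$ and $Z(H) = 1$, we have $Z(G) = \{1\} \times A \cong A$ and the projection $G \to G/Z(G)$ can be identified with the first coordinate projection $G \to H$. Under this identification, if $\phi = (\psi, \alpha, \gamma)$ as in Lemma \ref{lemaauto}, then the induced automorphism $\bar{\phi}$ on $G/Z(G)$ is precisely $\psi$, so in particular $\Fix \bar{\phi} = \Fix \psi$. The center $Z(G) = A$ is finitely generated abelian, and $G$ is of type $\F_n$ (resp. $\FP_n$) since $H$ is and the property passes through direct products with abelian groups of type $\F_\infty$, so Theorem \ref{Fix.Center} applies.

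Next I would translate $I_\phi$ and $P_\phi$. Writing $A$ additively, for $z = (1, v) \in Z(G)$ we have $z^{-1}\phi(z) = (1, \gamma(v) - v)$, so $I_\phi = \im(\gamma - \Id) \leq A$. For a general $g = (h,a) \in G$, a direct computation gives
\[ g^{-1}\phi(g) = \bigl(h^{-1}\psi(h),\, \alpha(h) + (\gamma - \Id)(a)\bigr). \]
Hence $g^{-1}\phi(g) \in I_\phi \subseteq Z(G)$ forces both $h \in \Fix \psi$ and $\alpha(h) + (\gamma - \Id)(a) \in \im(\gamma - \Id)$; since $I_\phi$ is a subgroup, the latter is equivalent to $\alpha(h) \in \im(\gamma - \Id)$. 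Thus $P_\phi$ corresponds exactly to $\{h \in \Fix\psi \mid \exists a \in A \text{ with } \alpha(h) = \gamma(a) - a\}$, matching the statement in (2).

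For the equivalence of (1) and (3) via (iii) of Theorem \ref{Fix.Center}, note that a character $[\chi] \in \Sigma^1(\Fix\bar{\phi})^c$ is, under our identification, a character of $\Fix\psi$, and $\chi(gZ(G)) = \chi(h)$ for $g = (h,a)$. The condition ``there exists $g \in G$ with $g^{-1}\phi(g) \in I_\phi$ and $\chi(gZ(G)) \neq 0$'' translates, by the computation above, to ``there exists $(h,a) \in \Fix\psi \times A$ with $\alpha(h) = (\gamma - \Id)(a)$ and $\chi(h) \neq 0$'', which is exactly condition (3) of the corollary.

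With these three identifications in hand, conditions (1), (2), (3) of the corollary correspond one-to-one with conditions (i), (ii), (iii) of Theorem \ref{Fix.Center}, and the result follows. There is no real obstacle here beyond careful bookkeeping; the only point requiring attention is the passage between multiplicative notation in $G$ and additive notation in $A$ when computing $g^{-1}\phi(g)$, and recognizing that adjusting $a$ does not change the membership $\alpha(h) + (\gamma-\Id)(a) \in \im(\gamma-\Id)$, which is what collapses the apparently two-variable condition on $(h,a)$ to a single condition on $h$.
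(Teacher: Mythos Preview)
Your proposal is correct and follows exactly the approach of the paper, which simply says ``Apply Theorem \ref{Fix.Center} with $G=H\times A$, noting that $Z(H\times A)=1\times A$, $\overline{\phi}=\psi$ and $\phi_{|Z(G\times A)}=\gamma$.'' You have carefully spelled out the identifications of $I_\phi$, $P_\phi$, and the condition in (iii), which the paper leaves implicit; there is nothing to add or correct.
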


\begin{proof}
    Apply Theorem \ref{Fix.Center} with $G=H\times A$, noting that $Z(H\times A)=1\times A$, $\overline{\phi}=\psi$ and $\phi_{|Z(G\times A)}=\gamma$. 
\end{proof}

Now we deal with the two natural automorphisms of abelian groups: the identity and the inversion.

\begin{cor} \label{teo.gammaid}
    Let $H$ be a group of type $\F_n$ (resp. $\FP_n$) with $Z(H) = 1$ and let $A$ be a finitely generated abelian group. Let $\phi=(\psi, \alpha, \Id) \colon H \times A \to H\times A$ be an automorphism. Let $\alpha_1$ be the restriction of $\alpha$ to the subgroup $\Fix\psi$ of $H$.
 Then $\Fix \phi$ is of type $\F_n$ (resp. $\FP_n$) if and only if  $\ker \alpha_1$ is of type $\F_n$ (resp. $\FP_n$). If that is the case, then $\Fix \psi$ is of type $\F_n$ (resp. $\FP_n$).
\end{cor}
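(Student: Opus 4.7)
The plan is to apply Corollary \ref{cordirprod} directly, specializing to $\gamma = \Id$, and then to clean up with the basic finiteness lemma. With $\gamma = \Id$, the condition $\alpha(h) = \gamma(a) - a$ becomes $\alpha(h) = 0$, so
\[
P_\phi = \{ h \in \Fix\psi \mid \exists a \in A,\ \alpha(h) = 0\} = \ker \alpha_1.
\]
Hence the equivalence (1) $\Leftrightarrow$ (2) in Corollary \ref{cordirprod} immediately yields that $\Fix\phi$ is of type $\F_n$ (resp. $\FP_n$) if and only if both $\Fix\psi$ and $\ker\alpha_1$ are of type $\F_n$ (resp. $\FP_n$).

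The remaining step is to eliminate $\Fix\psi$ from the statement. For this I would use the short exact sequence
\[
1 \to \ker\alpha_1 \to \Fix\psi \to \im\alpha_1 \to 1.
\]
Since $\im\alpha_1 \leq A$ is a finitely generated abelian group, it is of type $\F_\infty$ (hence of type $\F_n$ and $\FP_n$ for every $n$). Therefore, by Proposition \ref{basicfinprop}(1), if $\ker\alpha_1$ is of type $\F_n$ (resp. $\FP_n$), then $\Fix\psi$ is so as well. This shows that in the conjunction coming from Corollary \ref{cordirprod}, the hypothesis on $\Fix\psi$ is automatic, giving the desired equivalence with $\ker\alpha_1$ alone. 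The final clause of the statement is just the observation that the same short exact sequence argument produces the $\F_n$ (resp. $\FP_n$) property for $\Fix\psi$ whenever $\ker\alpha_1$ has it.

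There is no genuine obstacle here: once one notices that $\gamma - \Id = 0$ trivializes the condition defining $P_\phi$, the entire argument reduces to citing Corollary \ref{cordirprod} and the standard extension result Proposition \ref{basicfinprop}. The only minor point to be careful about is that $\im \alpha_1$ is finitely generated abelian (as a subgroup of the finitely generated abelian group $A$), which is needed to invoke Proposition \ref{basicfinprop}. The homological version proceeds verbatim with $\F_n$ replaced by $\FP_n$ throughout.
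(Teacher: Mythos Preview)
Your proof is correct. The paper takes a slightly more direct route for the main equivalence: it computes $\Fix\phi$ explicitly as $\ker\alpha_1 \times A$ (since $(h,v)\in\Fix\phi$ forces $\psi(h)=h$ and $\alpha(h)+v=v$, i.e.\ $\alpha(h)=0$), and then the equivalence follows at once from Proposition~\ref{basicfinprop} applied to the direct factor $A$; only for the final clause about $\Fix\psi$ does the paper invoke Corollary~\ref{cordirprod}. You instead start from Corollary~\ref{cordirprod} to obtain the joint condition on $\Fix\psi$ and $\ker\alpha_1$, and then use the extension $1\to\ker\alpha_1\to\Fix\psi\to\im\alpha_1\to 1$ together with Proposition~\ref{basicfinprop} to show the first condition is implied by the second. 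Both arguments are short and use the same ingredients; the paper's explicit identification $\Fix\phi=\ker\alpha_1\times A$ is a small extra piece of information, while your approach makes it more transparent that the corollary is a pure specialization of Corollary~\ref{cordirprod}.
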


\begin{proof}
To ease notation we prove only the topological version. Note that $(h,v)\in \Fix \phi$ if and only if $h\in \Fix \psi$ and $\alpha(h)+v=v$. Hence 
\begin{equation*}\label{eqfixker}    
\Fix \phi = (\Fix \psi \cap \ker \alpha) \times A = \ker \alpha_1 \times A.
\end{equation*}

Since $A$ is $\F_{\infty}$, by Proposition \ref{basicfinprop} we have $\Fix \phi$ is $\F_n$ if and only if  $\ker \alpha_1$ is $\F_n$. If that is the case then $\Fix\psi$ is $\F_n$ by Corollary \ref{cordirprod}.
\end{proof}

\begin{ex}
Let $ G = A_{\Gamma} \times \BZ$, where $A_{\Gamma}$ is a centerless Right-angled Artin group. Then for $\alpha \colon A_\Gamma \to \BZ$ and  $\phi = (\Id, \alpha, \Id) \in \Aut(G)$, we have $\Fix \phi = \ker\alpha \times \BZ$, which by \cite{BestvinaBrady1997} may have a lot of interesting combinations of finiteness properties, e.g. it may be finitely presented but not of type $\FP_2$, or of type $\F_n$ but not $\F_{n+1}$ for any $n\geq 1$. 
\end{ex}

\begin{cor}\label{corfinind}
   Let $H$ be an  centerless group of type $\F_n$ (resp. $\FP_n$), $A$ be a finitely generated abelian group  and $\phi=(\psi,\alpha,\gamma) \colon G\times A\to G\times A$ be an automorphism such that $Fix(\gamma)$ is finite.   
   Then $\Fix \phi$ is of type $\F_n$ (resp. $\FP_n$) if and only if $\Fix \psi$ is of type  $\F_n$ (resp. $\FP_n$).
\end{cor}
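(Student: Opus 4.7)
The plan is to apply Corollary \ref{cordirprod}, which says that $\Fix\phi$ is of type $\F_n$ (resp. $\FP_n$) if and only if both $\Fix\psi$ and the subgroup $P_\phi = \{h \in \Fix\psi \mid \alpha(h) \in (\gamma-\Id)(A)\}$ are of type $\F_n$ (resp. $\FP_n$). The ``only if'' direction is then immediate. For the ``if'' direction, assuming $\Fix\psi$ is of type $\F_n$, it suffices to show that $P_\phi$ is of type $\F_n$. My strategy is to prove that $P_\phi$ has finite index in $\Fix\psi$, and then invoke the fact (mentioned in Section \ref{sec.prel}) that properties $\F_n$ and $\FP_n$ pass to and from finite index subgroups.

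To see that $[\Fix\psi : P_\phi] < \infty$, note that the restriction $\alpha_1 \coloneqq \alpha|_{\Fix\psi} \colon \Fix\psi \to A$ induces an injection
\[ \Fix\psi / P_\phi \hookrightarrow A / (\gamma - \Id)(A), \]
so it is enough to show that $(\gamma - \Id)(A)$ has finite index in $A$. The key observation is that $\Fix(\gamma) = \ker(\gamma - \Id)$, which by hypothesis is finite. Writing $A \cong \BZ^k \oplus T$ with $T$ the (finite) torsion subgroup, $\gamma$ preserves $T$ and induces an automorphism $\bar{\gamma}$ of $A/T \cong \BZ^k$. Since $\ker(\gamma - \Id)$ is finite, the kernel of $\bar{\gamma} - \Id$ on $\BZ^k$ is a torsion subgroup of a torsion-free group, hence trivial. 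Thus $\bar{\gamma} - \Id$ is injective, and its image has full rank in $\BZ^k$, hence finite index. A short diagram chase using the finiteness of $T$ then shows that $(\gamma - \Id)(A)$ itself has finite index in $A$.

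Combining these two steps gives that $P_\phi$ has finite index in $\Fix\psi$, hence inherits the type $\F_n$ (resp. $\FP_n$) property from $\Fix\psi$, and Corollary \ref{cordirprod} delivers the conclusion. The only nontrivial step is the finite-index statement for $(\gamma - \Id)(A)$, and this is easily handled since the ``finite kernel implies finite cokernel'' principle is classical for finite-rank free abelian groups; the torsion part of $A$ contributes only a bounded discrepancy.
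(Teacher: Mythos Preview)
Your proof is correct and follows essentially the same route as the paper's: both directions go through Corollary~\ref{cordirprod}, and for the ``if'' direction the key is that $(\gamma-\Id)(A)$ has finite index in $A$ (since $\Fix\gamma=\ker(\gamma-\Id)$ is finite, so $\bar\gamma-\Id$ is injective on the free part $A/A_{\mathrm{tors}}$), whence $P_\phi=\alpha_1^{-1}((\gamma-\Id)(A))$ has finite index in $\Fix\psi$. Your write-up is slightly more explicit about the injection $\Fix\psi/P_\phi\hookrightarrow A/(\gamma-\Id)(A)$ and the diagram chase through the torsion part, but the argument is the same.
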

\begin{proof}
     Again to ease notation we prove only the topological version. If $\Fix \phi$ is $\F_n$ then so is $\Fix \psi$ by Corollary \ref{cordirprod}.

     Now suppose $\Fix \psi$ is of type $\F_n$. Let $\alpha_1\coloneqq \alpha_{|\Fix\psi}$. Since $\Fix \gamma$ is finite then $\Fix \gamma \subset A_{tors}$, which means $(\Id_A - \gamma)(A/A_{tors})\simeq A/A_{tors}$ hence $(\Id_A - \gamma)(A)$ is a finite index subgroup of $A$. That means $P_{\phi}=\alpha_1^{-1}\left((\Id_A-\gamma)(A)\right)$ is a finite index subgroup of $\Fix \psi$ hence it is of type $\F_n$ too.

     Then $\Fix\phi$ is of type $\F_n$ by Corollary \ref{cordirprod}.
\end{proof}

\begin{cor} \label{teo.gammainv}
    Let $H$ be a centerless group of type $\F_n$ (resp. $\FP_n$), $A$ be a finitely generated  abelian group and $\phi=(\psi, \alpha, \gamma) \colon H\times A\to H\times A$ be an automorphism with $\gamma$ being the inversion. Then $\Fix \phi$ is of type $\F_n$ (resp. $\FP_n$) if and only if $\Fix \psi$ is of type $\F_n$ (resp. $\FP_n$).     
\end{cor}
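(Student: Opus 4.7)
The plan is to realize this as a direct consequence of Corollary \ref{corfinind} once we observe the key fact that the fixed-point set of the inversion on a finitely generated abelian group is finite.

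First, I would compute $\Fix(\gamma)$ when $\gamma \colon A \to A$ is the inversion. By definition
\[ \Fix(\gamma) = \{ a \in A \mid -a = a \} = \{ a \in A \mid 2a = 0\} = A[2],\]
the $2$-torsion subgroup of $A$. Since $A$ is finitely generated abelian, we can write $A \simeq \BZ^r \oplus T$ with $T$ finite, and then $A[2] \simeq T[2]$ is a finite group. Alternatively, one can see that $(\Id_A - \gamma)(a) = 2a$, so the image $(\Id_A - \gamma)(A) = 2A$ has finite index in $A$, which is the version of the hypothesis that enters the proof of Corollary \ref{corfinind}.

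Having verified that $\Fix(\gamma)$ is finite, I would then simply invoke Corollary \ref{corfinind}: it states that under this finiteness assumption on $\Fix(\gamma)$, for any automorphism $\phi = (\psi, \alpha, \gamma)$ of $H \times A$, the subgroup $\Fix \phi$ is of type $\F_n$ (resp. $\FP_n$) if and only if $\Fix \psi$ is. This gives the desired equivalence. There is no real obstacle, since the inversion case is exactly the prototypical example where $\Fix(\gamma)$ lies inside $A_{\mathrm{tors}}$; the only substantive observation is the elementary reduction $\Fix(\gamma) = A[2]$.
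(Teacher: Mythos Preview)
Your proposal is correct and matches the paper's own proof essentially verbatim: the paper also observes that every element of $\Fix\gamma$ has order at most $2$, hence $\Fix\gamma$ is finite, and then invokes Corollary~\ref{corfinind}. Your additional remark that $(\Id_A-\gamma)(A)=2A$ has finite index is exactly the mechanism used inside the proof of Corollary~\ref{corfinind}, so there is nothing to add.
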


\begin{proof}
  By construction, every element of $\Fix \gamma$ has order at most 2, hence $\Fix\gamma$ is finite. Then the result follows from Corollary \ref{corfinind}.
\end{proof}

The next example illustrates the case when $\gamma$ is neither the identity, nor the inversion, and $\Fix \gamma$ is infinite.

\begin{ex} \label{Ex.a}
Consider the automorphism $\gamma(x,y) = (x,-y)$ of $\BZ^2$, and let $\delta \colon H \to \BZ$ be any group homomorphism. Note that $\gamma\notin\{\Id, -\Id\}$ and $\Fix\gamma=\BZ\times 0$ is infinite. Let $\alpha \colon H \to \BZ^2$ be given by $\alpha(g) = (\delta(g),0)$. Then for $\phi = (\Id, \alpha, \gamma) \in \Aut(H \times \BZ^2)$ we have
\[ \Fix \phi = \ker(\delta) \times \BZ \times \{0\}.\]
If $\ker \delta$ is not of type $\F_n$, then nor is $\Fix\phi$, even if $\Fix\psi = \Fix\Id=H$ is of type $\F_{\infty}$.

\end{ex}

\begin{proof}[Proof of Theorem \ref{cor.dpdim}]
    We prove the topological version. Suppose $G$ has $\FnFPa$ property. Let $\psi\in\Aut H$ and let $\chi \colon H\to \BR$ be a homomorphism such that $\rk_{\BZ} \im \chi\leq \rk A$. By composing $\chi$ with an embedding $\iota  \colon \im \chi \to A$ we obtain a homomorphism $\alpha \colon H\to A$. Define $\phi\coloneqq(\psi,\alpha, \Id)\in \Aut G$, as in Section \ref{sec.dp}.  By hypothesis $\Fix \phi$ is of type $\F_n$. Applying Corollary~\ref{teo.gammaid} we obtain that $\Fix\psi$ and $\ker \alpha_{|\Fix\psi}$ are of type $\F_n$. Then $H$ has $\FnFPa$ property.  Since $\ker \alpha_{|\Fix\psi}=\ker \chi_{|\Fix\psi}$ then there is nothing else to prove.

    Now suppose the second condition.  Note that $Z(G)=1\times A$ implies $G/Z(G)\simeq H$, so let $\phi\in \Aut G$ and $\nu \colon H\to A$ be a homomorphism. By Lemma \ref{lemaauto}, there are maps $\psi\in\Aut H$, $\alpha \colon H\to A$ and $\gamma\in \Aut A$ such that $\phi=(\psi,\alpha,\gamma)$. Let $\pi \colon A\to A/I_{\phi}$ be the projection. Considering the map $\theta= \epsilon_{\phi}+\pi \circ \nu_{|\Fix\psi} \colon \Fix\psi \to A/I_{\phi}$, by Theorem \ref{FnFPaCenter} it is enough to prove that $\ker(\theta)$ is of type $\F_n$. 

 Note that $\epsilon_{\phi}=\pi\circ \alpha_{|\Fix\psi}$. Let $\beta\coloneqq \alpha+\nu \colon H\to A$ and $\beta_1 \coloneqq \beta_{|\Fix\psi}$, such that $\pi\circ \beta_1=\theta$.

Since $A/I_{\phi}$ is finitely generated abelian, there is a homomorphism $\rho \colon A/I_{\phi} \to \BR$ with finite kernel. We may consider then the composition $\chi \coloneqq \rho \circ \pi \circ \beta \colon H \to \BR$. Note that $\rk_{\BZ} \im \chi\leq \rk_{\BZ} \im \beta \leq \rk_{\BZ}A$. By hypothesis $\ker(\chi_{|\Fix \psi})$ is of type  $\F_n$.

Define the map 
\begin{align*}
\tilde{\beta} \colon \frac{\ker (\chi_{|\Fix \psi})}{\beta_1^{-1}(I_{\phi})}&\to \frac{A}{I_{\phi}}\\
\bar{h} &\mapsto \pi(\beta(h)).
\end{align*}

Note that $\tilde{\beta}$ is well defined and injective since 
$$\bar{g}=\bar{h}\Leftrightarrow \beta_1(g) -\beta_1(h)\in I_{\phi} \Leftrightarrow \tilde{\beta}(\bar{g})=\tilde{\beta}(\bar{h}).$$

Besides, the image of $\tilde{\beta}$ is inside $\ker\rho$, since $h\in \ker\chi$ implies $\chi (h)= \rho \pi \beta (h) = 0$. Hence the first quotient set is finite. 

That means $\ker(\chi|_{\Fix\psi})$ contains $\beta_1^{-1}(I_{\phi})$ as a finite index subgroup, hence $\beta_1^{-1}(I_{\phi})=\ker \pi\circ \beta_1=\ker \theta$ is also of type $\F_n$.
\end{proof}

 \section{Two counterexamples} \label{sec.ex}

 Finally we exhibit two counterexamples that establish the negative answer for Question \ref{question}, \ie, groups $H$ satisfying  $\FGFPa$ such that $H'$ is finitely generated  but $H \times \BZ$ does not satisfy $\FGFPa$. 

\subsection{First counterexample}

For the first counterexample, we need the Direct Product Formula for $\Sigma^1$.

\begin{teo}\cite{BNS1987}\label{dpformula}
    Let $G_1, G_2$ be finitely generated groups, and let $\chi \colon G_1 \times G_2 \to \BR$ be a homomorphism. Then
\[ [\chi] \in \Sigma^1(G_1 \times G_2) \iff
\begin{cases}
    [\chi_{|G_1}] \in \Sigma^1(G_1), \text{ or} \\
    [\chi_{|G_2}] \in \Sigma^1(G_2), \text{ or}\\
   \chi_{|G_1} \neq 0 \text{ and } \chi_{|G_2}\neq 0.
\end{cases}    \]
    \end{teo}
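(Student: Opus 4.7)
The plan is to work directly with the Cayley graph characterisation of $\Sigma^1$. Fix finite generating sets $X_1, X_2$ of $G_1, G_2$ and set $X = X_1 \cup X_2$; enlarging each $X_i$ (which does not affect $\Sigma^1$) we may assume that whenever $\chi_i \neq 0$ it contains an element $h_i$ with $\chi_i(h_i) > 0$. Identifying each $G_i$ with its image in $G = G_1 \times G_2$, an edge of $\Cay(G,X)$ labelled by a generator in $X_j$ changes only the $j$-th coordinate of its endpoints.

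If some $\chi_i$ vanishes, say $\chi_2 = 0$, then $G_\chi = (G_1)_{\chi_1} \times G_2$, and the induced subgraph of $\Cay(G,X)$ on $G_\chi$ is the Cartesian graph product $\Cay(G_1,X_1)_{\chi_1}\,\square\,\Cay(G_2,X_2)$. Since $\Cay(G_2,X_2)$ is connected, the product is connected iff the first factor is, giving $[\chi] \in \Sigma^1(G) \iff [\chi_1] \in \Sigma^1(G_1)$; this matches the stated equivalence in this branch, as (B) and (C) are vacuous. The case $\chi_1 = 0$ is symmetric.

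Assume now that both $\chi_1 \neq 0$ and $\chi_2 \neq 0$; then (C) holds automatically, so we only need to show $[\chi] \in \Sigma^1(G)$. Given $g = (g_1,g_2) \in G_\chi$, write $g_1 = s_1 \cdots s_m$ in $X_1^{\pm}$ and $g_2 = t_1 \cdots t_\ell$ in $X_2^{\pm}$, and, for positive integers $K,L$ to be chosen, consider the word
\[
w \;=\; h_2^K\, s_1 \cdots s_m\, h_1^L\, h_2^{-K}\, t_1 \cdots t_\ell\, h_1^{-L}.
\]
Using the commutation between $G_1$ and $G_2$ inside $G$, one rewrites $w = (h_2^K g_1 h_2^{-K})(h_1^L g_2 h_1^{-L}) = g_1 g_2 = g$. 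A segment-by-segment tally of the $\chi$-values of the partial products shows that, for $K$ large enough to dominate the most negative $\chi_1$-prefix of $s_1 \cdots s_m$ and to absorb $\chi_1(g_1)$ when negative, and then $L$ large enough relative to $\chi_1(g_1)$ and the most negative $\chi_2$-prefix of $t_1 \cdots t_\ell$, every prefix of $w$ has $\chi$-value $\geq 0$. This yields a path from $1$ to $g$ inside $\Cay(G,X)_\chi$, whence $[\chi] \in \Sigma^1(G)$.

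The main obstacle is precisely Case~C: naive single-buffer detours — say prefixing by $h_2^K$ and suffixing by $h_2^{-K}$, or removing the buffer one letter at a time — are forced at some step through vertices of the form $(g_1, 1)$ or $(1, g_2)$, whose $\chi$-value can be strictly negative. The staggered construction $w$ sidesteps this by installing the $h_1^L$-buffer in the first coordinate \emph{before} the $h_2^K$-buffer is removed from the second, so that at every intermediate vertex at least one of the two buffers is still in place to keep $\chi \geq 0$.
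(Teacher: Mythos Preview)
The paper does not supply a proof of this statement; it is quoted from \cite{BNS1987} and used as a black box in Example~\ref{conterex}, so there is nothing in the paper to compare your argument against.

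Your proof is correct. The reduction when one restriction vanishes is clean: with $\chi_2=0$ the half-space $G_\chi$ factors as $(G_1)_{\chi_1}\times G_2$, and the induced subgraph is the Cartesian product $\Cay(G_1,X_1)_{\chi_1}\,\square\,\Cay(G_2,X_2)$, connected iff the first factor is. In the remaining case your staggered word
\[
w=h_2^{K}\,s_1\cdots s_m\,h_1^{L}\,h_2^{-K}\,t_1\cdots t_\ell\,h_1^{-L}
\]
does represent $g$ (the commutation rewriting is valid), and the prefix bookkeeping checks out: choose $K$ so that $K\chi_2(h_2)$ dominates both the minimal $\chi_1$-prefix along $s_1\cdots s_m$ and $-\chi_1(g_1)$, then $L$ so that $L\chi_1(h_1)$ dominates $-\chi_1(g_1)$ together with the minimal $\chi_2$-prefix along $t_1\cdots t_\ell$; the final segment $h_1^{-L}$ causes no trouble because its prefix values decrease monotonically to $\chi_1(g_1)+\chi_2(g_2)=\chi(g)\geq 0$. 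The one point worth making explicit is that $\Sigma^1$ is independent of the finite generating set, which is what licenses enlarging $X_i$ to contain an $h_i$ with $\chi_i(h_i)>0$; you allude to this but it is the only place where something beyond the bare definition in Section~\ref{sec.prel} is used.
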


\begin{ex}  \label{conterex}
Let $N = F_2 \times F_2$. By \cite[Thm.~4.8]{ZhangVenturaWu2015}, $N$ has $\FGFPa$. Next, consider $H = N \rtimes C_2$, where the generator $\sigma$ of $C_2$ acts as $\sigma(x,y) = (y,x)$. In other words, $H$ is the wreath product  $F_2 \wr C_2$. By \cite[Thm.~9.12]{Neumann1964}, $N$ is a characteristic subgroup of $H$.

Let $\phi\in \Aut H$. Then the fixed subgroup $\Fix\phi$ contains $\Fix \phi_{|N} = \Fix\phi \cap N$ as a finite index subgroup. Since $N$ has $\FGFPa$ then $\Fix \phi$ is finitely generated. So $H$ has $\FGFPa$.

Let $[\chi]\in S(H)$. Since $\sigma$ has finite order, then $\chi(\sigma)=0$ hence  $\chi_{|N}=(\chi_1,\chi_1)$ for some character $[\chi_1]\in S(F_2)$. By Theorem \ref{dpformula}, $[\chi_{|N}] \in \Sigma^1(N)$, so Theorem \ref{teo.sigma} implies $[\chi] \in \Sigma^1(H)$. Hence $H'$ is finitely generated by Theorem \ref{teo.sigma}.

The automorphism $\psi \colon H \to H$ determined by conjugation with $\sigma$ has $\Fix \psi = C_H(\sigma) = \Delta \times C_2$, where $\Delta = \{(x,x) \in F_2 \times F_2 \mid x \in F_2\} \simeq F_2$. So $\Sigma^1(\Fix \psi) = \emptyset$ by Theorem \ref{teo.sigma}. 

Now, let $[\chi]\in S(H)$ be a character such that $\rk_{\BZ} \im \chi\leq \rk_{\BZ} \BZ=1$, \ie, a discrete character. Since $0\neq \chi_{|N}=(\chi_1,\chi_1)$ then $\chi_{|\Fix\psi}\neq 0$ hence $\chi_{|\Fix\psi}\in\Sigma^1(\Fix\psi)^c=S(\Fix\psi)$. By Theorem \ref{teo.sigma}, $\ker\chi_{|\Fix\psi}$ is not finitely generated hence $H \times \BZ$ does not have $\FGFPa$ by Theorem~\ref{cor.dpdim}. 
    
\end{ex}

\subsection{Second counterexample}
  
For the second counterexample, we will need some knowledge on Artin groups.

Given a finite simplicial graph $\Gamma$, with edges labeled by integers greater than 1, the Artin group with  $\Gamma$ as underlying graph, denoted by $A_{\Gamma}$, is given by a finite presentation, with generators corresponding to the vertices of $\Gamma$ and relations given by  \[  \underbrace{abab\cdots}_{m \text{ factors}}=\underbrace{baba\cdots}_{m \text{ factors}} \] for each edge of $\Gamma$, labeled by $m$, that links the vertices $a$ and $b$.

With that definition, we say an Artin group $A_{\Gamma}$ is of large type if all the edges of $\Gamma$ are labeled by integers greater or equal to 3. We also say that $A_{\Gamma}$ is free of infinity if $\Gamma$ is complete.

Every Artin group $A_{\Gamma}$ is associated with a Coxeter group, obtained by the quotient of $A_{\Gamma}$ modulo the normal closure of the squares of the vertices of $\Gamma$. If this Coxeter group $W$ is finite, then $A_{\Gamma}$ has a Garside element $\Delta$ such that the center of $A_{\Gamma}$ is generated by $\Delta$ or $\Delta^2$. For example, if $\Gamma$ is a single edge connecting vertices $a$ and $b$ with label $m>2$ then the Garside element of $A_{\Gamma}$ is $\Delta=\underbrace{aba\cdots}_{m \text{ factors}}=\underbrace{bab\cdots}_{m \text{ factors}}$. A good survey on Artin groups may be found at \cite{mccammond2017mysterious}.

We do not have the full description of automorphisms of Artin groups yet, but Vaskou \cite{vaskou2025automorphisms} has obtained it for large type free of infinity Artin groups, and Jones and Vaskou  \cite{JonesVaskou2024} have used this description to calculate their fixed subgroups. For our interest here, it is enough to present the result below.

\begin{cor}\label{corartinfgfpa}\cite{JonesVaskou2024}
    Let $A_{\Gamma}$ be a large type free of infinity Artin group. Then $A_{\Gamma}$ has $\FGFPa$ property. Besides, if $\psi$ is the automorphism of $A_{\Gamma}$ induced by a label-preserving graph automorphism $\sigma$, then $$\Fix\psi=A_{\Fix \sigma}\ast F,$$
   where $\Fix \sigma$ is the subgraph of fixed points of $\sigma$ and $F$ is the free group generated by the Garside elements of the groups $A_e$, for all edges $e$ whose vertices are transposed by $\sigma$. 
\end{cor}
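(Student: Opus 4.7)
The plan is to invoke Vaskou's classification of $\Aut(A_\Gamma)$ for large-type free-of-infinity Artin groups \cite{vaskou2025automorphisms}, which describes every automorphism as a composition of inner automorphisms, the global inversion $g\mapsto g^{-1}$, label-preserving graph automorphisms, and certain explicit Dehn twist-like maps supported on dihedral parabolic subgroups. Without this structural result one cannot even enumerate the automorphisms whose fixed subgroups must be tested, so it is the indispensable starting point.

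For the $\FGFPa$ statement I would reduce the computation of $\Fix \phi$ to each elementary type. Inner automorphisms yield centralizers, which in this class of Artin groups are finitely generated thanks to the proper cocompact CAT(0) action of $A_\Gamma$ on its (2-dimensional) Deligne complex. The global inversion has trivial fixed subgroup since $A_\Gamma$ is torsion-free. A general $\phi$ is dealt with by passing to the finite-index subgroup of $\Aut(A_\Gamma)$ that acts trivially on $\Gamma$, inside which finite generation of the fixed subgroup is preserved through finite extensions; on that subgroup the twist components can be absorbed into inner automorphisms up to elements of the center of the dihedral parabolics.

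For the explicit formula, the inclusion $A_{\Fix \sigma} * F \leq \Fix \psi$ is transparent: vertices of $\Fix \sigma$ are pointwise fixed and generate $A_{\Fix \sigma}\leq \Fix \psi$, and for each edge $e=\{a,b\}$ transposed by $\sigma$ with label $m$, the Artin relation identifies the two alternating $m$-fold products $abab\cdots$ and $baba\cdots$, so $\psi$ fixes the Garside element $\Delta_e$. To establish the reverse inclusion together with the free product structure, I would let $\langle\psi\rangle$ act equivariantly on the Deligne complex of $A_\Gamma$ and extract a $\Fix \psi$-invariant tree whose vertex stabilizers recover $A_{\Fix \sigma}$ and whose edges associated with $\sigma$-transposed edges of $\Gamma$ contribute trivial stabilizers, so that Bass--Serre theory yields the free product decomposition $A_{\Fix \sigma}*F$.

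The main obstacle is the last paragraph: proving that no hidden relations appear among the $\Delta_e$ and $A_{\Fix \sigma}$, so that the decomposition is genuine rather than just an upper bound. This is where the large-type free-of-infinity hypothesis genuinely enters, through the good geometric properties of the Deligne complex (in particular its CAT(0) geometry in dimension 2) and the Garside-theoretic normal forms available in the dihedral parabolics $A_e$. The cleanest execution is likely to construct the quotient graph-of-groups of the action of $\langle \psi\rangle$ on a suitable skeleton of the Deligne complex and read off the fundamental group of its fixed-point structure.
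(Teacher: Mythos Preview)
The paper does not prove this corollary at all: its entire proof reads ``Follows from \cite[Corollary~1.3]{JonesVaskou2024} and \cite[Theorem~4.4]{JonesVaskou2024}.'' The statement is quoted from the literature, and the paper only needs it as a black box to build Example~\ref{exartin}. Your proposal instead tries to reconstruct an actual argument along the lines of Jones--Vaskou, which is far more than what is required here.

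As a sketch of how such a result might be proved, your outline is plausible in spirit---Vaskou's classification is indeed the input, and the Deligne complex is the natural geometric tool---but it is not a proof. You yourself flag the main gap: establishing that $A_{\Fix\sigma}$ and the $\Delta_e$'s generate a genuine free product inside $\Fix\psi$, with no hidden relations, is the substantive content, and ``construct the quotient graph-of-groups and read off the fundamental group'' hides all the work. Your treatment of the $\FGFPa$ part is also loose: reducing a general $\phi$ to the pieces in Vaskou's decomposition does not immediately reduce $\Fix\phi$ to the fixed subgroups of those pieces, since fixed subgroups do not behave simply under composition, and the passage ``twist components can be absorbed into inner automorphisms up to elements of the center'' is not an argument. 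For the purposes of this paper, the correct move is simply to cite Jones--Vaskou, as the authors do.
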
 

\begin{proof}
    Follows from \cite[Corollary 1.3]{JonesVaskou2024} and \cite[Theorem 4.4]{JonesVaskou2024}.
\end{proof}

We will also need the BNS-invariant for some Artin groups.

\begin{teo}\cite{meier1997geometric}\label{teo.meier}
  Let $A_e$ be the Artin group with a single edge $e$ as underlying graph, labeled by $m\geq 3$. Then
  \begin{enumerate}
      \item 
      If $m=2k$, $k>1$, then $S(A_e)=S^1$ and $\Sigma ^1(G_e)=S^1\setminus \{(1,-1), (-1,1)\}$.
\item
If $m=2k+1$ then $\Sigma^1(A_e)=S(A_e)=\{\pm1\}$.
  \end{enumerate}
\end{teo}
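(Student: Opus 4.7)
The plan is to compute the character sphere from the abelianization, then use the Garside-theoretic center to place most characters in $\Sigma^1$, and finally rule out the remaining two characters in the even case via Brown's algorithm for one-relator groups.

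First I would compute the abelianization. Writing the defining relation $\underbrace{aba\cdots}_m = \underbrace{bab\cdots}_m$ additively, for odd $m = 2k+1$ it collapses to $a = b$, so the abelianization of $A_e$ is $\BZ$ and $S(A_e) = \{\pm 1\}$; for even $m = 2k$ the relation becomes trivial, so the abelianization is $\BZ^2$ and $S(A_e) = S^1$.

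Next I would apply the center criterion. The Garside element $\Delta = \underbrace{aba\cdots}_m$ generates $Z(A_e)$ when $m$ is even, while $\Delta^2$ generates $Z(A_e)$ when $m$ is odd. The abelianization image of $\Delta$ is $(k,k)$ in the even case and $m$ in the odd case (so $2m$ for $\Delta^2$). Hence in the odd case every nontrivial $\chi$ satisfies $\chi(\Delta^2) \neq 0$, and Theorem \ref{sigma1center}(4) gives $\Sigma^1(A_e) = S(A_e) = \{\pm 1\}$ directly. In the even case, $\chi(\Delta) = k(\chi(a) + \chi(b))$ is nonzero whenever $\chi(a) + \chi(b) \neq 0$, so every $[\chi] \in S^1$ lies in $\Sigma^1(A_e)$ except possibly the antipodal pair $\chi_{\pm}$ given by $\chi_+(a) = 1$, $\chi_+(b) = -1$ and $\chi_- = -\chi_+$.

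Finally, to show $[\chi_{\pm}] \notin \Sigma^1(A_e)$ when $k > 1$, I would invoke Brown's algorithm for one-relator groups applied to the defining relator rewritten in cyclically reduced form as $r = (ab)^k(a^{-1}b^{-1})^k$. With $\chi = \chi_+$, the sequence of partial $\chi$-sums along $r$ oscillates between $1$ and $0$ over the first $2k$ letters and between $-1$ and $0$ over the last $2k$ letters; for $k > 1$ both the maximum and the minimum are attained at multiple positions, so Brown's peak/valley uniqueness condition fails and $[\chi_+] \notin \Sigma^1(A_e)$. The character $[\chi_-]$ is handled identically by the $a \leftrightarrow b$ symmetry of the presentation. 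I expect this last step to be the main obstacle: the first two steps are essentially formal consequences of standard Artin-group theory, whereas the verification of Brown's condition is the technical heart of the argument and also clarifies the hypothesis $k > 1$ (since $k = 1$ gives $A_e \cong \BZ^2$ and $\Sigma^1$ is automatically the whole sphere).
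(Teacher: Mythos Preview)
The paper does not prove this statement; it is quoted from \cite{meier1997geometric} and used as a black box. Your argument, however, is a correct self-contained proof. The abelianization computation and the center criterion (Theorem~\ref{teo.sigma}(4)) are exactly right: they settle the odd case completely and leave only the antipodal pair $[\chi_\pm]$ open in the even case. For those two characters your appeal to Brown's one-relator criterion is also valid: the relator $(ab)^k(a^{-1}b^{-1})^k$ is cyclically reduced, and with $\chi_+(a)=1$, $\chi_+(b)=-1$ the partial-sum sequence attains its maximum $1$ and its minimum $-1$ each exactly $k$ times, so for $k>1$ neither extremum is unique and $[\chi_\pm]\notin\Sigma^1(A_e)$. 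Thus you have supplied a proof where the paper simply cites one; your line of argument is in fact the standard one behind Meier's original computation.
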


In the hypothesis of Theorem \ref{teo.meier}, if the endpoints of $e$ are $v$ and $w$,  then $[\chi]\in \Sigma ^1(A_e)^c$ if and only if $m$ is even and $\chi(v)=-\chi(w)\neq 0$. We will name the edges described above as ``$\chi$-dead edges''. We also say a subgraph $\lgoth$ of $\Gamma$ is dominant if every vertex of $\Gamma$ is adjacent to some vertex of $\lgoth$.

\begin{teo}\cite{almeida2015sigma1}\label{teo.almeida}
    Let $A_{\Gamma}$ be an Artin group such that $\Gamma$ has circuit rank 1 (\ie, $\pi_1(\Gamma)$ is infinite cyclic). Define the living subgraph $\lgoth=\lgoth(\chi)$ as being the subgraph obtained from $\Gamma$ after removing all vertices $v\in V(\Gamma)$ such that $\chi(v)=0$ and all the $\chi$-dead edges. Then
    $$\Sigma^1(A_{\Gamma})=\{[\chi]\in S(A_{\Gamma}) \tq \lgoth(\chi) \text{ is a connected and dominant subgraph of $\Gamma$} \}.$$ 
\end{teo}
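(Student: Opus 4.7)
The plan is to apply the standard Cayley graph characterization of $\Sigma^1$: for a finitely generated group $G$ with finite generating set $X$, a class $[\chi]\in S(G)$ lies in $\Sigma^1(G)$ if and only if the full subgraph $\Cay(G,X)_\chi$ spanned by the submonoid $G_\chi$ is connected. I would take $X=V(\Gamma)$, the standard Artin generators, and argue the two implications separately.

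For necessity, I would construct explicit obstructions in the contrapositive direction. If $\lgoth(\chi)$ is not dominant, there is a vertex $v\in V(\Gamma)$ with $\chi(v)=0$ such that every edge at $v$ is either joining two dead vertices or is itself a $\chi$-dead edge; by retracting $A_\Gamma$ onto a suitable dihedral or star-like subgroup supported at the star of $v$ and invoking Theorem~\ref{teo.meier}, one exhibits a restriction of $\chi$ that already fails to lie in $\Sigma^1$ of that subgroup, which obstructs connectedness in $\Cay(A_\Gamma,V(\Gamma))_\chi$. If instead $\lgoth(\chi)$ is disconnected, one chooses a $\chi$-dead edge $e$ whose removal from $\Gamma$ separates two components of $\lgoth(\chi)$; the restriction of $\chi$ to the dihedral Artin subgroup $A_e$ lies outside $\Sigma^1(A_e)$ by Theorem~\ref{teo.meier}, and the circuit-rank-one hypothesis on $\Gamma$ is what guarantees that this local obstruction survives globally.

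For sufficiency, I would proceed by induction on the combinatorial structure of $\Gamma$. The base case of a single edge is exactly Theorem~\ref{teo.meier}. If $\Gamma$ is a tree (circuit rank $0$), induction on the number of edges works: attach a leaf $v$ via an alive edge to a subtree already handled, using that an alive-edge relation $vwvw\cdots = wvwv\cdots$ permits moving in $G_\chi$ between $v$ and its neighbour $w$. The dominance condition is used precisely to handle a dead vertex $v$ attached to an alive neighbour $w$, since then the path $1\to w\to wv$ stays in $G_\chi$ even though left-multiplication by $v$ alone would not. For circuit rank $1$, one locates the unique cycle $C$ in $\Gamma$ and treats the trees hanging off $C$ via the tree case, reducing the problem to checking that the two traversals of $C$ in opposite directions can be connected within $G_\chi$; this is where the connectedness of $\lgoth(\chi)$ along $C$ intervenes.

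The hardest step will be the cycle argument in the sufficiency direction. Relations imposed by travelling around the cycle $C$ are not a single Artin edge relation but a product of them, and verifying that each such product can be rewritten as a word whose prefixes lie in $G_\chi$ requires a careful case analysis on the parities of the edge labels around $C$ and the signs of $\chi$ at the cycle vertices. The local input from Theorem~\ref{teo.meier}, encoded as the notion of a $\chi$-dead edge, governs which edges may be crossed while staying in the positive cone, and the global combinatorics on $C$ is controlled exactly by the connected-and-dominant conditions on $\lgoth(\chi)$. The restriction to circuit rank $1$ is essential here, since with circuit rank $\geq 2$ independent cycles can interact in ways not captured by the naive living-subgraph criterion, producing a strictly more intricate invariant.
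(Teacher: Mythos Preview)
The paper does not prove this statement: Theorem~\ref{teo.almeida} is quoted from \cite{almeida2015sigma1} and used only as a black box in Example~\ref{exartin}. There is therefore no proof in the paper to compare your proposal against.

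Regarding your sketch on its own merits, the necessity direction via retractions onto parabolic subgroups $A_e$ and an appeal to Theorem~\ref{teo.meier} is the standard mechanism and is sound in outline. The sufficiency direction, however, has a real gap. Connectedness of $\Cay(A_\Gamma,V(\Gamma))_\chi$ means that \emph{every} element of $(A_\Gamma)_\chi$ is joined to $1$ by a path in the positive cone, not merely that one can traverse the cycle $C$ once while staying nonnegative. Your inductive scheme (``attach a leaf via an alive edge'', then ``reduce to the cycle'') treats the Artin relations as if they were local to individual edges, but an arbitrary word in $(A_\Gamma)_\chi$ mixes generators from all over $\Gamma$, and rewriting it into a cone-positive path requires control over how relations from different edges interact. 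The argument in \cite{almeida2015sigma1} does not proceed by a bare Cayley-graph induction of this kind; it relies on structural decompositions of $A_\Gamma$ (amalgamated products and HNN extensions along standard parabolics) together with the known behaviour of $\Sigma^1$ under such decompositions, which is what genuinely exploits the circuit-rank-$1$ hypothesis. Your acknowledgment that ``the cycle argument is the hardest step'' is correct, but the proposal as written does not contain that step.
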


Theorem \ref{teo.almeida} is actually part of an ongoing general conjecture for Artin groups, with some recent advancements (\cf \cite{ferrer2025sigma}). Now we can proceed to the second counterexample.

\begin{ex}\label{exartin}
Let $\Gamma$ be the graph
$$\begin{tikzpicture}[
    vtx/.style={circle, draw, fill=gray!20, minimum size=1.8em},
    lbl/.style={fill=white, inner sep=1pt, font=\small}
]

    \node[vtx] (a) at (0, 2) {a};
    \node[vtx] (b) at (-1.732, -1) {b};
    \node[vtx] (c) at (1.732, -1) {c};

    \draw (a) -- (b) node[midway, lbl] {4};
    \draw (b) -- (c) node[midway, lbl] {3};
    \draw (c) -- (a) node[midway, lbl] {3};
\end{tikzpicture}$$
and let $H \coloneqq A_{\Gamma}=\apres{a,b,c}{aca=cac,bcb=cbc,abab=baba}$, a free of infinity large type Artin group. Then $H$ has FGFPa  by Corollary \ref{corartinfgfpa} and it is centerless since it is large-type of rank 3 (\cf \cite[Remark 2.11]{JonesVaskou2024}). 

To calculate the BNS-invariant of $H$, note that, because of the Artin group presentation, for each $[\chi]\in S(H)$ we have $\chi(a)=\chi(b)=\chi(c)\neq 0$, so $\Sigma^1(H)=S(H)=\{\pm 1\}$ by Theorem \ref{teo.almeida}, hence $H'$ is finitely generated by Theorem \ref{teo.sigma}.

On the other hand, consider the automorphism $\psi\in \Aut H$ induced by the graph automorphism $\sigma \colon \Gamma\to\Gamma$ given by $\sigma(a)=b$, $\sigma(b)=a$ and $\sigma(c)=c$. By Corollary \ref{corartinfgfpa}, $\Fix \psi = \langle c \rangle \ast \langle abab \rangle$, which is free hence $\Sigma^1(\Fix\psi)=\emptyset$ by Theorem \ref{teo.sigma}.

Now consider $\chi \colon H\to \BR$ given by $\chi(a)=\chi(b)=\chi(c)=1$. Then $\chi_{|\Fix\sigma}\neq 0$ hence $[\chi_{|\Fix\psi}]\in S(\Fix\psi)=\Sigma^1(\Fix\psi)^c$. By Theorem \ref{cor.dpdim},  $H\times \BZ$ does not have FGFPa.

\end{ex}

\section*{Acknowledgements}
This work was partially supported by FINAPESQ-UEFS and grant FAPEMIG [APQ-02750-24].


\begin{thebibliography}{10}

\bibitem{almeida2015sigma1}
K.~Almeida and D.~Kochloukova.
\newblock The {$\Sigma^1$}-invariant for {Artin} groups of circuit rank 1.
\newblock {\em Forum Mathematicum}, 27(5):2801--2822, 2015.

\bibitem{BestvinaBrady1997}
M.~Bestvina and N.~Brady.
\newblock Morse theory and finiteness properties of groups.
\newblock {\em Invent. Math.}, 129(3):445--470, 1997.

\bibitem{bestvina1992train}
M.~Bestvina and M.~Handel.
\newblock Train tracks and automorphisms of free groups.
\newblock {\em Annals of Mathematics}, 135(1):1--51, 1992.

\bibitem{Bieri}
R.~Bieri.
\newblock {\em Homological dimension of discrete groups}.
\newblock Queen Mary College Mathematics Notes. Queen Mary College, Department
  of Pure Mathematics, London, second edition, 1981.

\bibitem{BNS1987}
R.~Bieri, W.~D. Neumann, and R.~Strebel.
\newblock A geometric invariant of discrete groups.
\newblock {\em Invent. Math.}, 90(3):451--477, 1987.

\bibitem{BieriRenz1988}
R.~Bieri and B.~Renz.
\newblock Valuations on free resolutions and higher geometric invariants of
  groups.
\newblock {\em Comment. Math. Helv.}, 63(3):464--497, 1988.

\bibitem{Brown}
K.~S. Brown.
\newblock {\em Cohomology of groups}, volume~87 of {\em Graduate Texts in
  Mathematics}.
\newblock Springer-Verlag, New York, 1994.
\newblock Corrected reprint of the 1982 original.

\bibitem{CarolinaWagner}
C.~de~Miranda~e Pereiro and W.~C. Sgobbi.
\newblock The {BNS} invariants of the braid groups and pure braid groups of
  some surfaces.
\newblock {\em Communications in Algebra}, 53(4):1688--1712, 2025.

\bibitem{ferrer2025sigma}
M.~E. Ferrer.
\newblock On the {$\Sigma^1$}- and {$\Sigma^2$}-invariants of {Artin} groups.
\newblock {\em arXiv preprint arXiv:2501.08692}, 2025.

\bibitem{geoghegan2008topological}
R.~Geoghegan.
\newblock {\em Topological methods in group theory}.
\newblock Springer, 2008.

\bibitem{Gersten}
S.~M. Gersten.
\newblock Fixed points of automorphisms of free groups.
\newblock {\em Advances in Mathematics}, 64(1):51--85, 1987.

\bibitem{JonesVaskou2024}
O.~Jones and N.~Vaskou.
\newblock Fixed subgroups in {Artin} groups.
\newblock {\em arXiv preprint arXiv:2407.11839}, 2024.

\bibitem{KMPN2012}
D.~H. Kochloukova, C.~Mart{\'\i}nez-P{\'e}rez, and B.~E. Nucinkis.
\newblock Fixed points of finite groups acting on generalised {Thompson}
  groups.
\newblock {\em Israel Journal of Mathematics}, 187(1):167--192, 2012.

\bibitem{LeiMaZhang}
J.~Lei, J.~Ma, and Q.~Zhang.
\newblock A note on the finitely generated fixed subgroup property.
\newblock {\em Bulletin of the Australian Mathematical Society}, page 1–11,
  2024.

\bibitem{mccammond2017mysterious}
J.~McCammond.
\newblock The mysterious geometry of {Artin} groups.
\newblock {\em Winter Braids Lecture Notes}, 4:1--30, 2017.

\bibitem{meier1997geometric}
J.~Meier.
\newblock Geometric invariants for {Artin} groups.
\newblock {\em Proceedings of the London Mathematical Society}, 74(1):151--173,
  1997.

\bibitem{minasyan2012fixed}
A.~Minasyan and D.~Osin.
\newblock Fixed subgroups of automorphisms of relatively hyperbolic groups.
\newblock {\em Quarterly journal of mathematics}, 63(3):695--712, 2012.

\bibitem{Neumann1964}
P.~M. Neumann.
\newblock On the structure of standard wreath products of groups.
\newblock {\em Mathematische Zeitschrift}, 84:343--373, 1964.

\bibitem{MallikaEnric}
M.~Roy and E.~Ventura.
\newblock Fixed subgroups and computation of auto-fixed closures in
  free-abelian times free groups.
\newblock {\em Journal of Pure and Applied Algebra}, 224(4):106210, 2020.

\bibitem{vaskou2025automorphisms}
N.~Vaskou.
\newblock Automorphisms of large-type free-of-infinity {Artin} groups.
\newblock {\em Geometriae Dedicata}, 219(1):16, 2025.

\bibitem{ZhangVenturaWu2015}
Q.~Zhang, E.~Ventura, and J.~Wu.
\newblock Fixed subgroups are compressed in surface groups.
\newblock {\em International Journal of Algebra and Computation},
  25(05):865--887, 2015.

\end{thebibliography}
\end{document}